
\documentclass[12pt,reqno,a4paper,oneside]{amsart}

\textwidth=150mm
\textheight=240mm
\oddsidemargin=4.6mm
\topmargin=-0.4mm

\usepackage{graphicx,url}
\usepackage{amssymb}
\usepackage{amsmath}
\usepackage{amsthm}
\usepackage{tikz}
\usetikzlibrary{patterns}
\usepackage{float}
\usepackage{chngcntr}
\usepackage{appendix}
\allowdisplaybreaks[4]

\newtheorem{theorem}{Theorem}
\newtheorem*{theorem*}{Theorem}
\newtheorem{lemma}[theorem]{Lemma}
\newtheorem{corollary}[theorem]{Corollary}
\newtheorem*{corollary*}{Corollary}

\newtheorem*{proposition*}{Proposition}

\theoremstyle{remark}
\newtheorem{remark}[theorem]{Remark}

\newcommand{\RR}{\mathbb{R}}

\newcommand{\NN}{\mathbb{N}}

\newcommand{\cH}{\mathcal{H}}

\newcommand{\dd}{\,\mathrm{d}}

\DeclareMathOperator{\vspan}{span}
\DeclareMathOperator{\supp}{supp}

\DeclareMathOperator{\spec}{spec}
\newcommand{\specess}{\mathop{\mathrm{spec}_\mathrm{ess}}}
\newcommand{\specd}{\mathop{\mathrm{spec}_\mathrm{disc}}}

\begin{document}

\title[$\delta'$-potentials on star graphs]{On Schr\"odinger operators with  $\delta'$-potentials supported on star graphs}

\author{Konstantin Pankrashkin}

\address{Carl von Ossietzky Universit\"at Oldenburg, Institut f\"ur Mathematik, 26111 Oldenburg, Germany}
\email{konstantin.pankrashkin@uol.de}

\author{Marco Vogel}
\address{Carl von Ossietzky Universit\"at Oldenburg, Institut f\"ur Mathematik, 26111 Oldenburg, Germany}
\email{marco.vogel@uol.de}

\begin{abstract}
The spectral properties of two-dimensional Schr\"odinger operators with $\delta'$-potentials supported on star graphs are discussed. We describe the essential spectrum and give a complete description of situations in which the discrete spectrum is non-trivial but finite. A more detailed study is presented for the case of a star graph with two branches, in particular, the small angle asymptotics for the eigenvalues is obtained.

\bigskip

\noindent The final version is published in J. Phys. A: Math. Theor. 55 (2022) 295201 and is available at \url{https://doi.org/10.1088/1751-8121/ac775a}.
\end{abstract}

\maketitle

\section{Introduction}

The $\delta'$-potentials represent an idealized model of quantum-mechanical interactions supported by small sets, and they build an important class of so-called solvable models in quantum mechanics, as many explicit formulas for various spectral characteristics of the respective Schr\"odinger operators can be obtained \cite{AGHH}. They are also of interest from the point of view of the spectral geometry, as one deals with various links between the geometric properties of the potential support and the eigenvalues of the associated differential operators, see e.g. \cite{leaky,lotor2}. In the present paper we prove some results on the spectral analysis for Schr\"odinger operators with $\delta'$-potentials supported by star graphs in two dimensions (in this precise case, the star graph geometry is completely determined by the angles between the branches). While some statements on such operators can be found in the literature, we are not aware of any systematic study of this class
and believe that the paper can have a consolidating effect.  It seems that singular potentials supported by star graphs were first considered in \cite{en} for $\delta$-potentials as a model
of quantum branching structures. Both $\delta$ and $\delta'$ potentials model interactions which are strongly concentrated near $\Gamma$ but with different limiting behavior in the normal direction \cite{enz}, so these two classes require separate study. From the methodological point of view, our main observation is that a rather detailed spectral picture for the $\delta'$-case can still be deduced from a comparison with Robin Laplacians and $\delta$-potentials studied by various authors in earlier works.

\begin{figure}
	\centering

\begin{tabular}{ccc}
	\begin{tikzpicture}
		\draw [->,color=gray, line width=0.5mm] (-1,0) -- (3,0);
		\draw [->,color=gray, line width=0.5mm] (0,-1.5) -- (0,2.5);
		\draw [domain=0:2.0, line width=3.0pt] plot (\x,\x);
		\draw [domain=-1:0, line width=3.0pt] plot (\x,-2*\x);
		\draw [domain=-0.7:0, line width=3.0pt] plot (\x,3*\x);
		\draw (0.8,0) arc (0:45:0.8);
		\draw (1.4,0) arc (0:115:1.4);
		\draw (2,0) arc (0:252:2);
		\coordinate (a) at (1,0);
		\coordinate (b) at (0.8,1.2);
		\coordinate (c) at (0.8,1.8);
		\coordinate (d) at (2.3,2);
		\coordinate (e) at (-1,2);
		\coordinate (f) at (0.1,-2);
		\draw (a) node[above] {$\theta_1$};
		\draw (b) node[above] {$\theta_2$};
		\draw (c) node[above] {$\theta_3$};    
		\draw (d) node[below] {$\Gamma_1$};
		\draw (e) node[above] {$\Gamma_2$};
		\draw (f) node[left] {$\Gamma_3$};		
		
	\end{tikzpicture} &\qquad
&\begin{tikzpicture}
	\draw [domain=0:2.0, line width=3.0pt] plot (\x,\x);
	\draw [domain=-1:0, line width=3.0pt] plot (\x,-2*\x);
	\draw [domain=-0.7:0, line width=3.0pt] plot (\x,3*\x);
	\draw (0.5,0.5) arc (45:-110:0.7);
	\draw (0.75,0.75) arc (45:115:1.05);
	\draw (-0.63,1.35) arc (115:250:1.5);
	\coordinate (a) at (1,-1);
	\coordinate (b) at (0.3,1.0);
	\coordinate (c) at (-0.5,0);
	\coordinate (d) at (2.3,2);
	\coordinate (e) at (-1,2);
	\coordinate (f) at (0.1,-2);
	\coordinate (v3) at (1,0);
	\coordinate (v2) at (-1.5,0);
	\coordinate (v1) at (0.6,1.6);
	\draw (a) node[above] {$2\varphi_3$};
	\draw (b) node[above] {$2\varphi_1$};
	\draw (c) node[left] {$2\varphi_2$};    
	\draw (d) node[below] {$\Gamma_1$};
	\draw (e) node[above] {$\Gamma_2$};
	\draw (f) node[left] {$\Gamma_3$};		

	\draw (v3) node[right] {\Large $V_3$};		
	\draw (v1) node[above] {\Large $V_1$};		
	\draw (v2) node[left] {\Large $V_2$};		
	
\end{tikzpicture}\\
(a) && (b)
\end{tabular}

	\caption{(a) Star graph $\Gamma$ with three branches $\Gamma_j$. (b) The associated decomposition into infinite sectors $V_j$.}\label{fig0}
	
\end{figure}
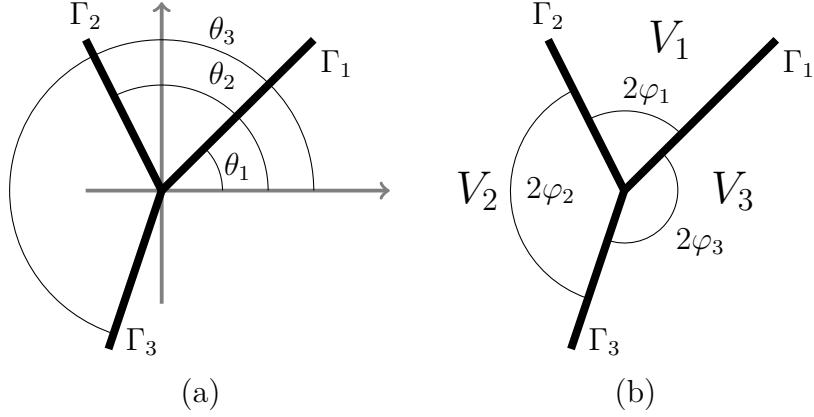

Let us  introduce the main definitions. Let $(r,\theta)$ be the standard polar coordinates in $\RR^2$, then
the set
\[
\Gamma:=\bigcup_{j=1}^{M}\Gamma_{j} \text{ with } \Gamma_{j}=\{(r,\theta_{j}):r\geq0\},
\]
with $M\in\NN$ and $0\leq\theta_{1}<...<\theta_{M}<2\pi$, is called a star graph with $M$ branches, see Fig.~\ref{fig0}(a). For $\alpha\in\RR$, the set $\Gamma$
gives rise to a two-dimensional ``geometric'' Schr\"odinger operator $T_{\Gamma,\alpha}$, which is defined as the unique self-adjoint operator in $L^2(\RR^2)$
generated by the closed, lower semibounded, symmetric sesquilinear form $t_{\Gamma,\alpha}$ given by
\[
 t_{\Gamma,\alpha}(u,u)=\int_{\mathbb{R}^{2}\setminus \Gamma}|\nabla u|^{2}\dd x+\alpha \int_\Gamma \big|[u]\big|^2\dd \sigma
\]		
with domain $D(t_{\Gamma,\alpha})=H^1(\RR^2\setminus\Gamma)$, where $\dd\sigma$ stands for the integration with respect to arclength and $[u]$ denotes the jump of $u$ at $\Gamma$, i.e. the difference of the values of $u$ at the two sides of $\Gamma$. Namely, let $N$ be a unit normal field at $\Gamma$, smooth outside the origin. Then for $s\in \Gamma_j\setminus\{0\}$ and $u$ smooth at the both sides of $\Gamma_j$ one has
\[
[u](s):=\lim_{t\to 0^+}u\big (s+tN(s)\big)-u\big(s-tN(s)\big),
\]
 which uniquely extends by density to a bounded map $H^1(\RR^2\setminus\Gamma)\ni u\mapsto [u]\in L^2(\Gamma)$ using the Sobolev trace theorem. One often uses the formal expression $T_{\Gamma,\alpha}=-\Delta+\alpha\delta'_{\Gamma,\alpha}$, and it can be shown that $T_{\Gamma,\alpha}$ acts as  $u\mapsto -\Delta u$ (in the sense of distributions on $\RR^2\setminus\Gamma$) on the functions $u$ satisfying special transmission conditions on $\Gamma$,
\[
\partial^+_N u=\partial^-_N u=:\partial_N u, \quad \alpha[u]=\partial_N u, \quad
\partial^\pm_N u(s):=\lim_{t\to0^+} N(s)\cdot \nabla u\big(s\pm tN(s)\big),
\]
where all values are understood in the sense of Sobolev traces: we refer to \cite{bel14,bll,er} for a detailed discussion
and a rigorous version of the approach through the transmission conditions.

The case of $\alpha\ge 0$ is relatively simple. On one hand,
one easily observes that $T_{\Gamma,\alpha}$ is a non-negative operator, hence its spectrum is contained in $[0,\infty)$.
Consider the $M$ connected components of $\RR^2\setminus\Gamma$, i.e. the infinite sectors
bounded by $\Gamma_j$ and $\Gamma_{j+1}$ (assuming the cyclic enumeration convention),
\begin{equation}
	\label{eqvj}
	V_{j}:=\big\{(r,\theta):\theta\in(\theta_{j},\theta_{j+1}),\ r>0\big\},\quad \theta_{M+1}:=2\pi+\theta_{1}.
\end{equation}
If one denotes 
\begin{equation}
	\label{phij}
	\varphi_j:=\dfrac{\theta_{j+1}-\theta_j}{2},
\end{equation}
then the opening angle of $V_j$ is $2\varphi_j$, see Fig.~\ref{fig0}(b). Remark that all $V_j$ are quasiconical domains, i.e. contain arbitrary large balls \cite[Sec.~49]{glaz}, and that $T_{\Gamma,\alpha}u$ coincides with $-\Delta u$ if $u$ is supported in one of the $V_j$, then it follows that $[0,\infty)\subset\spec T_{\Gamma,\alpha}$ (it seems that the argument
is not sufficiently known: we prefer to give an elaborate proof in Appendix \ref{specapp}). It follows that
$\spec T_{\Gamma,\alpha}=[0,\infty)$ for all $\alpha\ge 0$, in particular, no discrete spectrum is present. Hence, from now on we are interested in the case of an ``attractive interaction''
\[
\alpha<0.
\]
We prove the following results:
\begin{theorem}\label{thm1} Let $\alpha<0$, then for any choice of $M$ and $\theta_j$ one has:
\[
\specess T_{\Gamma,\alpha}=[-4\alpha^2,\infty),
\quad 
\specd T_{\Gamma,\alpha} \text{is at most finite.}
\]
Furthermore, the discrete spectrum of $T_{\Gamma,\alpha}$ is
\begin{enumerate}
 \item empty for
 \begin{itemize}
 	\item  $M=1$, i.e. if $\Gamma$ is a half-line,
 	 \item $M=2$ with $\theta_2-\theta_1=\pi$, i.e. if $\Gamma$ is a line,
 \end{itemize}
\item non-empty in all other cases.
\end{enumerate}
%
\end{theorem}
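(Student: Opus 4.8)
The plan is to establish the four assertions in the order essential spectrum, finiteness, empty cases, non-emptiness, using throughout the comparison with Robin Laplacians on the open sectors $V_j$ announced in the introduction.

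First I would settle the essential spectrum. To get $[-4\alpha^2,\infty)\subseteq\specess T_{\Gamma,\alpha}$ I construct, for every $\lambda=-4\alpha^2+k^2$, a singular Weyl sequence supported far out on a fixed branch $\Gamma_\ell$, where $\Gamma_\ell$ is a straight ray: in arclength/normal coordinates $(s,t)$ set $\psi_n(s,t)=\chi_n(s)e^{iks}\phi(t)$ with $\phi(t)=\sgn(t)e^{2\alpha|t|}$ the antisymmetric transverse bound state (eigenvalue $-4\alpha^2$ of the one-dimensional $\delta'$-interaction, using $\alpha[\phi]=\partial_N\phi$) and $\chi_n$ a longitudinal cut-off supported in $[n,2n]$. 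These are orthogonal after normalization and satisfy $\|(T_{\Gamma,\alpha}-\lambda)\psi_n\|\to0$. For the reverse bound $\inf\specess T_{\Gamma,\alpha}\geq-4\alpha^2$ I use a Persson-type localization: with an IMS partition of unity subordinate to a large ball $B_R$, thin tubes around the individual rays and the sector interiors, one has
\[
t_{\Gamma,\alpha}(u,u)=\sum_k t_{\Gamma,\alpha}(\chi_k u,\chi_k u)-\sum_k\big\||\nabla\chi_k|\,u\big\|^2 ,
\]
where the trace term splits since $\sum_k\chi_k^2=1$ and each $\chi_k$ is continuous across $\Gamma$. Each tube piece sees a single straight ray, so $t_{\Gamma,\alpha}(\chi_k u,\chi_k u)\geq-4\alpha^2\|\chi_k u\|^2$ by comparison with the full-line operator $T_{L,\alpha}$, whose spectrum is $[-4\alpha^2,\infty)$; letting $R\to\infty$ removes the gradient error.

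The crucial step for finiteness is the Robin comparison. With $u_j:=\restr{u}{V_j}$ the elementary pointwise estimate on $\Gamma_j$,
\[
\alpha\big|u_{j-1}-u_j\big|^2\geq 2\alpha\big(|u_{j-1}|^2+|u_j|^2\big),\qquad\alpha<0,
\]
summed over the edges and combined with $D(t_{\Gamma,\alpha})=\bigoplus_j H^1(V_j)$, yields
\[
t_{\Gamma,\alpha}(u,u)\geq\sum_j\Big(\int_{V_j}|\nabla u_j|^2\dd x+2\alpha\int_{\partial V_j}|u_j|^2\dd\sigma\Big)=\sum_j r_j(u_j,u_j),
\]
where $r_j$ is the form of the Robin Laplacian $R_j$ on the infinite sector $V_j$ with boundary parameter $2\alpha$. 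Thus $T_{\Gamma,\alpha}\geq\bigoplus_j R_j$. Each $R_j$ has $\specess R_j=[-4\alpha^2,\infty)$ and only finitely many eigenvalues below $-4\alpha^2$ (by the known spectral theory of Robin Laplacians in infinite sectors), so min-max gives $\#\big(\spec T_{\Gamma,\alpha}\cap(-\infty,-4\alpha^2)\big)\leq\sum_j\#\big(\spec R_j\cap(-\infty,-4\alpha^2)\big)<\infty$. The empty cases follow by monotonicity: if $\Gamma$ is a single ray, embed it into the line $L\supseteq\Gamma$, note $[u]=0$ on $L\setminus\Gamma$ for $u\in D(t_{\Gamma,\alpha})$, hence $t_{\Gamma,\alpha}(u,u)=t_{L,\alpha}(u,u)\geq-4\alpha^2\|u\|^2$; if $\Gamma$ is a line then $T_{\Gamma,\alpha}=T_{L,\alpha}\geq-4\alpha^2$ directly. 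In both cases $\specess=[-4\alpha^2,\infty)$ leaves no room for discrete spectrum.

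In every remaining case $\sum_j2\varphi_j=2\pi$ together with $M\geq3$ or $M=2$ non-collinear forces a convex sector $V_j$ with opening $2\varphi_j<\pi$, and for such a wedge $R_j$ (parameter $2\alpha$) has an eigenvalue $\mu<-4\alpha^2$ with ground state $w$ exponentially localized at the vertex. I would lift $w$ to a trial function for $T_{\Gamma,\alpha}$ by odd reflection across the two edges of $V_j$: setting $u=w$ on $V_j$ and $u=-w\circ\rho$ on the neighbouring sectors (with $\rho$ the reflection across the respective edge-line) makes the normal derivative continuous and turns $\partial_N w=2\alpha w$ into $\alpha[u]=\partial_N u$ exactly on $\Gamma_j$ and $\Gamma_{j+1}$, so that the Rayleigh quotient equals $\mu<-4\alpha^2=\inf\specess T_{\Gamma,\alpha}$, forcing a discrete eigenvalue. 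I expect this last step to be the main obstacle: the iterated reflections tile the plane consistently only when the opening is a rational multiple of $\pi$, so the construction must be kept local and the exponentially small tails of the reflected ground state truncated before reaching the other branches, which requires checking that the resulting perturbation of the quadratic form stays below the gap $-4\alpha^2-\mu$. The only genuinely external input is the sub-threshold binding for the convex Robin wedge, which I would import from the literature on Robin Laplacians in sectors.
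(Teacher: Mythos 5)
Your handling of the essential spectrum, the finiteness, and the two empty cases is essentially sound. The lower bound via Weyl sequences is the paper's argument (one small repair: your transverse profile $\phi(t)=\sgn(t)e^{2\alpha|t|}$ must also be cut off in the normal direction, since otherwise the trial functions live on an infinite strip crossing the other branches, where $[u]=0$ forces $\partial_N u=0$ in the operator domain and your functions violate this; the paper does this with a second cutoff $\widetilde\chi_n$ and a small aperture $a$). For the upper bound you propose a Persson/IMS localization, which is a viable but more technical alternative to the paper's route; the paper instead reuses the Robin comparison $T_{\Gamma,\alpha}\ge\bigoplus_j R_j$ (parameter $2\alpha$), which you also derive, together with the known fact $\specess Q^\gamma_\varphi=[-\gamma^2,\infty)$, so that one inequality settles both the essential spectrum bound and the finiteness at once. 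Your finiteness and empty-case arguments coincide with the paper's.

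The genuine gap is the non-emptiness step. Your odd reflection $u=w$ on $V_j$, $u=-w\circ\rho$ on the neighbours does enforce $\alpha[u]=\partial_N u$ on $\Gamma_j$ and $\Gamma_{j+1}$, but the conclusion ``so that the Rayleigh quotient equals $\mu$'' is false: the Rayleigh quotient is global, and $u$ is not an eigenfunction because the reflected copies terminate on interior interfaces inside the neighbouring sectors. The defect there is not an ``exponentially small tail'': on its far edge the reflected copy carries the value $-w$ restricted to the opposite edge of $V_j$, which is of order $\max w$ near the vertex. Any repair (interpolation across the leftover region, or cutting the reflection off at distance $\varepsilon$ from the edge) adds positive energy bounded below by a fixed multiple of $\int_{\partial V_j}|w|^2\,\dd\sigma$, and the eigenvalue equation $q(w,w)=\mu\|w\|^2$ forces $\int_{\partial V_j}|w|^2\dd\sigma\ge\frac{|\mu|}{2}\|w\|^2$ (for $\gamma=2$, $\alpha=-1$), i.e.\ this error is never small relative to $\|w\|^2$. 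Meanwhile the gap you must beat, $-4\alpha^2-\mu$, tends to $0$ as the opening $2\varphi_j\to\pi^-$ (the Robin eigenvalue emerges from the threshold), so no truncation scheme can work uniformly in the angle — but the theorem requires every non-degenerate geometry, including $M=2$ with angle arbitrarily close to $\pi$. The paper circumvents this entirely with a globally defined ``reflection'': the isometry $Ju:=(1_{\Omega_+}-1_{\Omega_-})u$ maps $H^1(\RR^2)$ into $H^1(\RR^2\setminus\Gamma_\theta)$ with $[Ju]=2u$ and unchanged Dirichlet integral, giving $h_{\theta,\alpha}(Ju,Ju)=\int_{\RR^2}|\nabla u|^2\dd x-4|\alpha|\int_{\Gamma_\theta}|u|^2\dd\sigma$, i.e.\ $H_{\theta,\alpha}\le A^{4|\alpha|}_\theta$, the Schr\"odinger operator with an attractive $\delta$-potential on the broken line. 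Its discrete spectrum below the common essential threshold $-4\alpha^2$ is non-empty by Exner--Ichinose, and min--max plus the monotonicity $T_{\Gamma,\alpha}\le T_{\Gamma',\alpha}$ for $\Gamma\supset\Gamma'$ handles general star graphs. If you want to keep a reflection picture, this sign flip is exactly the reflection that works, because it is defined on all of $\RR^2$ and requires no tiling.
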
	
Remark that a standard dilation argument shows the unitary equivalence
\begin{equation}
	 \label{unit1}
	T_{\Gamma,\alpha}\simeq \alpha^2 T_{\Gamma}, \qquad T_\Gamma:=T_{\Gamma,-1},
\end{equation}
so, in fact, it is sufficient to consider the case $\alpha=-1$ only. Theorem \ref{thm1} is proved in Sections~\ref{secess} and~\ref{secdisc}, and further observations will be presented through the whole text. In particular, in Corollary \ref{corolast} we show that the discrete spectrum of $T_{\Gamma,\alpha}$ can be arbitrarily large.

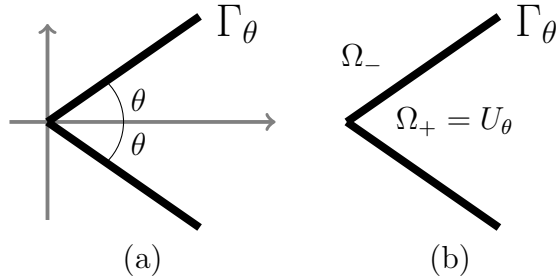
\begin{figure}[b]
	\centering

\begin{tabular}{ccc}
	\begin{tikzpicture}
		\coordinate (a) at (1.2,0);
		\coordinate (b) at (-0.5,0.5);
		\coordinate (c) at (2,0.2);
		\coordinate (d) at (2.5,1.7);
		\draw [->,color=gray, line width=0.5mm] (-0.5,0) -- (3,0);
		\draw [->,color=gray, line width=0.5mm] (0,-1.3) -- (0,1.3);
		\draw [domain=0:2, line width=3.0pt] plot (\x,-0.7*\x);
		\draw [domain=0:2, line width=3.0pt] plot (\x,0.7*\x);
		\draw (1,0) arc (0:45:0.75);
		\draw (1,0) arc (0:-45:0.75);
		\draw (a) node[above] {$\theta$};
		\draw (a) node[below] {$\theta$};
		\draw (d) node[below] {\Large $\Gamma_{\theta}$};
	\end{tikzpicture}
	& \qquad &
	\begin{tikzpicture}
	\coordinate (a) at (1.2,0);
	\coordinate (b) at (0.2,0.5);
	\coordinate (c) at (0.5,0);
	\coordinate (d) at (2.5,1.7);
	\draw [domain=0:2, line width=3.0pt] plot (\x,-0.7*\x);
	\draw [domain=0:2, line width=3.0pt] plot (\x,0.7*\x);
	\draw (b) node[above] {$\Omega_{-}$};    
	\draw (c) node[right] {$\Omega_{+}=U_\theta$};
	\draw (d) node[below] {\Large $\Gamma_{\theta}$};
\end{tikzpicture}\\
(a) && (b)
\end{tabular}	
	\caption{(a) Visualization of $\Gamma_{\theta}$. (b) The associated sectors $\Omega_+=U_\theta$ and $\Omega_-$.}\label{fig1}
\end{figure}

We then discuss in greater detail the case $M=2$, which allows for a more explicit analysis.
For $\theta\in(0,\frac{\pi}{2})$, let $\Gamma_{\theta}$ denote the star graph with two branches forming an angle of $2\theta$, 
more precisely
\begin{equation}
	\label{eqgt}
\Gamma_{\theta}=\big\{(x,y)\in\mathbb{R}^{2}:x\ge0\text{ and }\lvert y\rvert= x \tan \theta\big\},
\end{equation}
where $(x,y)$ are Cartesian coordinates, see Fig.~\ref{fig1}(a),
and consider the associated operators
\begin{equation}
	\label{eqhtt}
H_{\theta,\alpha}:=T_{\Gamma_\theta,\alpha},
\quad
H_{\theta}:=H_{\theta,-1}.
\end{equation}
The unitary equivalence \eqref{unit1} takes the form
\begin{equation}
	\label{unit2}
	H_{\theta,\alpha}\simeq \alpha^2 H_\theta.
\end{equation}
The sesquilinear forms for $H_{\theta,\alpha}$ and $H_\theta$ will be denoted respectively by $h_{\theta,\alpha}$
and $h_\theta$, in particular,
\[
h_{\theta,\alpha}(u,u)=\int_{\RR^2\setminus\Gamma_\theta}|\nabla u|^2\dd x+\alpha\int_{\Gamma_\theta}|[u]|^2\dd\sigma,
\quad
D(h_{\theta,\alpha})=H^1(\RR^2\setminus\Gamma_\theta).
\]

Then more detailed statements are possible:
\begin{theorem}\label{thm2}
	For any $\alpha<0$ and any $\theta\in(0,\frac{\pi}{2})$ there holds
	\[
	\specess H_{\theta,\alpha}=[-4\alpha^2,\infty), \quad \specd H_{\theta,\alpha} \text{ is non-empty and finite.}
	\]

Let $\alpha<0$ be fixed. For $\theta\to0^+$ the number of discrete eigenvalues of $H_{\theta,\alpha}$
grows to infinity, and the individual eigenvalues are continuous and strictly increasing in $\theta$, namely: 
\begin{enumerate}
	\item if for some $n\in\NN$ and $\theta_n\in(0,\frac{\pi}{2})$ the operator $H_{\theta_{n},\alpha}$ has at least $n$ discrete eigenvalues, then $H_{\theta,\alpha}$ has at least $n$ discrete eigenvalues for any $\theta\in(0,\theta_n)$, and the function $(0,\theta_n)\ni\theta\mapsto E_n(H_{\theta,\alpha})$ is continuous and strictly increasing.
	\item for any $n\in\NN$ there exists $\theta_n\in(0,\frac{\pi}{2})$ such that $H_{\theta,\alpha}$ has at least $n$ discrete eigenvalues for any $\theta\in(0,\theta_n)$, and
	\[
	E_{n}(H_{\theta,\alpha})=-\frac{\alpha^2}{(2n-1)^{2}\theta^{2}}+O\Big(\frac{1}{\theta}\Big) \text{ as } \theta\to 0^+.
	\]
\end{enumerate}
\end{theorem}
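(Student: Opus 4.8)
The plan is to reduce everything to the normalized operator $H_\theta=H_{\theta,-1}$ by means of the scaling equivalence \eqref{unit2}, after which the statement for $H_{\theta,\alpha}$ follows upon multiplying the eigenvalues by $\alpha^2$. The two assertions in the first display are then immediate from Theorem~\ref{thm1}: since $\Gamma_\theta$ defined in \eqref{eqgt} is a star graph with $M=2$ whose branches enclose the angle $2\theta\in(0,\pi)$, which is never equal to $\pi$, the essential spectrum equals $[-4\alpha^2,\infty)$ and the discrete spectrum is finite and non-empty. Thus the whole content of the theorem is the behaviour as $\theta\to0^+$, and I would organize the argument around an effective one-dimensional reduction inside the narrow sector $U_\theta=\Omega_+$, in which the relevant eigenfunctions concentrate.

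For the monotonicity and continuity in part~(1) I would transport the problem to a $\theta$-independent configuration. Writing $h_\theta$ in polar coordinates $(r,\phi)$ and applying a piecewise-linear, measure-preserving reparametrization of the angle $\phi$ that maps the two rays of $\Gamma_\theta$ onto those of a fixed reference graph $\Gamma_{\theta_*}$, one obtains a family of forms $\tilde h_\theta$ acting on the single fixed domain $H^1(\RR^2\setminus\Gamma_{\theta_*})$, whose coefficients depend smoothly and monotonically on $\theta$. The min-max principle then yields at once that each $E_n(H_\theta)$ is continuous and strictly increasing in $\theta$; moreover, since $E_n$ decreases as $\theta$ decreases while the essential-spectrum threshold $-4$ stays fixed, an eigenvalue lying below $-4$ at some $\theta_n$ remains a genuine discrete eigenvalue for all smaller $\theta$, which is exactly the persistence statement in~(1).

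The heart of the matter is part~(2), and here my strategy is a Born--Oppenheimer (adiabatic) reduction combined with Dirichlet--Neumann bracketing. Using the Cartesian coordinates $(x,y)$ of \eqref{eqgt}, I would freeze the longitudinal variable $x>0$ and study the transverse operator on the line $y\in\RR$ carrying two $\delta'$-interactions of strength $\approx-1$ at $y=\pm x\tan\theta$, the geometric corrections from the arc-length factor $\sec\theta$ and from the tilt of the normal being of relative order $\theta^2$. A direct computation for the symmetric transverse ground state leads to the relation $\kappa=1+\coth(\kappa\, x\tan\theta)$, whence the lowest transverse eigenvalue behaves like $-\tfrac{1}{x\tan\theta}=-\tfrac{1}{\theta x}\bigl(1+O(\theta)\bigr)$ as the separation shrinks. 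This transverse energy plays the role of an effective longitudinal Coulomb potential, so that on the relevant scale $x\sim\theta$ the operator $H_\theta$ is modelled by the one-dimensional Schr\"odinger operator $-\tfrac{d^2}{dx^2}-\tfrac{1}{\theta x}$ on $(0,\infty)$. Its bound states are $-\tfrac{1}{(2n-1)^2\theta^2}$, reproducing the announced leading term, and the number of these lying below the threshold $-4$ is $\sim\tfrac{1}{4\theta}$, giving the growth of the eigenvalue count and, together with part~(1), the existence of the thresholds $\theta_n$. To turn this into rigorous two-sided bounds I would slice the half-plane $x>0$ into longitudinal cells, impose Neumann, respectively Dirichlet, conditions on the transverse cuts so as to bracket $H_\theta$ from below and from above, and on each cell replace the transverse operator by its ground-state energy, the resulting errors being controlled by the smallness of the adiabatic parameter $\sim\theta$.

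The hard part will be precisely the rigour of this adiabatic separation near the vertex, for two reasons. First, the effective Coulomb potential $-\tfrac{1}{\theta x}$ is in the limit-circle case at $x=0$, so its eigenvalues depend on a boundary condition there; the half-integer quantization $(2n-1)$, as opposed to the Dirichlet value $2n$, reflects a Neumann-type (non-vanishing) behaviour of the true eigenfunctions at the vertex, and pinning this constant down requires a genuine matching of the adiabatic Ansatz to the exact two-dimensional solution in the vertex region, where the transverse and longitudinal scales $\theta^2$ and $\theta$ are not cleanly separated. Second, the subleading transverse correction of size $(x\tan\theta)^{-1/2}$ is itself of order $\theta^{-1}$ at the scale $x\sim\theta$, so that it, the bracketing errors, and the leakage of the eigenfunctions into the wide sector $\Omega_-$ must all be shown to contribute no more than the stated remainder $O(1/\theta)$. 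Establishing these uniform estimates is the technically demanding step, whereas the identification of the leading coefficient once the correct vertex condition is in hand is then a routine computation with the Coulomb eigenvalues.
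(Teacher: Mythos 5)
Your reduction to $\alpha=-1$ and the derivation of the first display from Theorem~\ref{thm1} are correct and match the paper. The problems begin with part~(1). Your plan is to map $\RR^2\setminus\Gamma_\theta$ onto a fixed configuration by a piecewise-linear reparametrization of the polar angle and then read off monotonicity from ``monotone coefficients''; but this transformation does \emph{not} produce coefficients that all move the same way in $\theta$. The angular stretch factor is $\theta/\theta_*$ on the narrow sector and $(\pi-\theta)/(\pi-\theta_*)$ on the wide one, so the two coefficients move in \emph{opposite} directions as $\theta$ varies; moreover the piecewise-constant Jacobian enters the $L^2$-weight, and if you renormalize it away by multiplying by $\sqrt{f'}$ the jump term is destroyed, because $f'$ takes different values on the two sides of each ray, so $\sqrt{c_1}u_+-\sqrt{c_2}u_-$ is no longer proportional to $[u]$. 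Thus ``the min-max principle then yields at once'' is a genuine gap, not an omitted routine step. The paper circumvents exactly this obstruction by first decomposing $H_\theta$ by parity in $y$ into a Neumann and a Dirichlet half, proving that the Dirichlet half has spectrum in $[-4,\infty)$ (a Fubini argument with the one-dimensional $\delta'$ bound), and only then rescaling one Cartesian coordinate: after rotating the half-plane so that the $\delta'$ sits on the $y$-axis, the scaling $v\mapsto\sqrt{\tan\theta}\,v(x,y\tan\theta)$ leaves the jump term invariant and produces a \emph{single} coefficient $\tan^2\theta$ in front of $\int|\partial_y v|^2$, from which monotonicity and continuity of $\Lambda_n$ follow; strict monotonicity still needs an extra argument (that no normalized function in the spectral subspace can satisfy $\partial_y v=0$, since a $y$-independent $L^2$ function on that domain vanishes).

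Part~(2) is where the main content lies, and your Born--Oppenheimer program is not a proof: as you yourself concede, the limit-circle behaviour of the effective Coulomb potential $-1/(\theta x)$ at $x=0$ means the constant $(2n-1)$ (rather than $2n$) is decided precisely by the vertex matching you do not carry out, and the uniform $O(1/\theta)$ error bounds for the adiabatic separation, the bracketing, and the leakage into $\Omega_-$ are all left open. In effect you propose to re-derive from scratch, for the $\delta'$ operator, the hard asymptotics that \cite{kp18} already established for Robin Laplacians in sectors (quoted in Subsection~\ref{sec-robin} and available to you). The paper instead transfers that known result by two elementary form comparisons: the extension-by-zero isometry $L^2(U_\theta)\to L^2(\RR^2)$ gives $H_\theta\le Q^1_\theta$, hence the upper bound $\Lambda_n(H_\theta)\le-\tfrac{1}{(2n-1)^2\theta^2}+O(1)$; and the inequality $|u_+-u_-|^2\le(1+\varepsilon)|u_+|^2+(1+\tfrac1\varepsilon)|u_-|^2$ applied to the jump term gives $H_\theta\ge Q^{1+\varepsilon}_\theta\oplus Q^{1+1/\varepsilon}_{\pi-\theta}$, and choosing $\varepsilon=b_n\theta$ with $b_n\in(0,2n-1)$ makes the wide-sector bottom $-(1+\tfrac1\varepsilon)^2$ lie above $\Lambda_n(Q^{1+\varepsilon}_\theta)$, yielding the matching lower bound $-\tfrac{1}{(2n-1)^2\theta^2}+O(\tfrac1\theta)$. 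If you want to salvage your write-up, the shortest repair is to replace the whole adiabatic analysis by this bracketing between Robin sector operators, and to insert the parity decomposition before your rescaling argument in part~(1).
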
	
In fact, the proofs of the two main theorems are closely interrelated, as will be seen below. It is clear that all statements of Theorem \ref{thm2} concerning the essential spectrum and the cardinality of the discrete spectrum follow directly from Theorem \ref{thm1} and are only included for the sake of completeness. The remaining statements dealing with the dependence
of eigenvalues on the angle $\theta$ are proved in Section \ref{secangle}.

\begin{remark}
While the sets $\Gamma_\theta$ and the operators $H_{\theta,\alpha}$ can be defined for any $\theta\in(0,\pi)$, considering only $\theta\in(0,\frac{\pi}{2})$ is a normalization and not a restriction.
In fact, for $\theta=\frac{\pi}{2}$ the set $\Gamma_\theta$ is a line, which is covered
by Theorem \ref{thm1} (no discrete spectrum), and for $\theta\in(\frac{\pi}{2},\pi)$ the operator $H_{\theta,\alpha}$ is unitarily equivalent to $H_{\theta',\alpha}$ with $\theta':=\pi-\theta\in(0,\frac{\pi}{2})$
as $\Gamma_{\theta}$ is an isometric copy of $\Gamma_{\theta'}$ obtained by the reflection on the $y$-axis. In particular,
$E_n(H_{\theta,\alpha})=E_n(H_{\pi-\theta,\alpha})$ for any $n$.
\end{remark}

\section{Preliminaries}

In this section we collect our main tools and recall the definition of some important model operators. We will denote
\begin{equation*}
	\mathbb{R}_{+}^{2}:=\mathbb{R}\times(0,\infty)\text{ and } \mathbb{R}_{-}^{2}:=\mathbb{R}\times(-\infty,0).
\end{equation*}
Our main results will be obtained by comparing $\delta'$-potentials with other operators associated with similar geometries and studied in previous works. The approach is inspired, in particular, by \cite{bll,lotor}.

\subsection{Min-Max Principle}\label{sec-min-max}
We will recall some notation and basic facts on the min-max principle, see e.g. \cite[Section XIII.1]{RS4}. All these facts are well known, but we prefer to state them here in a suitable form as they are intensively used in the subsequent text.

 Let $\mathcal{H}$ be an infinite-dimensional Hilbert space, $t$ be a closed, lower semibounded, densely defined, symmetric sesquilinear form with domain $D(t)$, and $T$ be the unique self-adjoint operator $T$ in $\cH$ generated by $t$.
 We denote by $\spec T$, $\specess T$, $\specd T$ respectively the spectrum, the essential spectrum and the discrete spectrum of $T$.
  For $n\in\mathbb{N}$, we denote the $n$-th discrete eigenvalue of $T$ (if it exists), when enumerated in the non-decreasing order and taking the multiplicities into account, by $E_{n}(T)$. We also put $\Sigma:=\inf\specess T$ (with $\Sigma:=+\infty$ for $\inf\specess T=\emptyset$).
We also consider the non-decreasing sequence of numbers
\begin{equation*}
\Lambda_{n}(T):=\inf_{\substack{V\subset D(t)\\ \dim V=n}}\sup_{u\in V\setminus\{0\}}\frac{t(u,u)}{\langle u,u\rangle}, \quad n\in\NN.
\end{equation*}
It is known (min-max principle) that only two cases are possible:
\begin{enumerate}
\item For all $n\in\mathbb{N}$ there holds $\Lambda_{n}(T)<\Sigma$. Then $E_n(T)=\Lambda_{n}(T)$ for any $n\in\NN$, i.e. the spectrum of $T$ is purely discrete, and $\Lambda_n(T)$ is its $n$-th eigenvalue for any $n$.

\item For some $N\in\NN\cup\{0\}$ there holds $\Lambda_{N+1}(T)\geq\Sigma$, while $\Lambda_n(T)<\Sigma$ for all $n\le N$.
Then $T$ has exactly $N$ discrete eigenvalues in $(-\infty,\Sigma)$, and $E_n(T)=\Lambda_{n}(T)$ for $n\in\{1,\dots,N\}$, while $\Lambda_{n}(T)=\Sigma$ for all $n\geq N+1$.
\end{enumerate}
In particular, in all cases there holds $\lim_{n\to\infty}\Lambda_n(T)=\Sigma$, and
if for some $n\in\NN$ one has $\Lambda_{n}(T)<\Sigma$, then $E_n(T)=\Lambda_n(T)$.

The above characterization of eigenvalues will mostly be used in combination with the following
construction. Let $T$ and $T'$ be self-adjoint operators in Hilbert spaces $\mathcal{H}$ and $\mathcal{H}'$ generated by closed, lower semibounded, densely defined, symmetric sesquilinear forms $t$ and $t'$.
Assume that there exists a linear isometry $J:\cH'\rightarrow \cH$ with
\[
J D(t')\subset D(t), \quad t(Ju,Ju)\leq t'(u,u) \text{ for all } u\in D(t'),
\]
which will be termed in the subsequent text as
\[
T\le T' \text{ using $J$.}
\]
It follows directly from the above definition that for any $n\in\mathbb{N}$ there holds $\Lambda_{n}(T)\leq\Lambda_{n}(T')$, and the min-max principle implies the following relations:
\begin{enumerate}
	\item $\inf\specess T\le \inf\specess T'$,
	\item if for some $a\in \RR$ the spectrum of $T$ in $(-\infty,a)$ is purely discrete and consists of $n$ eigenvalues,
	then the spectrum of $T'$ in $(-\infty,a)$ is purely discrete and consists of \emph{at most} $n$ eigenvalues,
	\item if for some $a\in\RR$ the spectrum of $T'$ in $(-\infty,a)$ is purely discrete and consists of $n'$ eigenvalues,
	then either (i) $\inf\specess T<a$ or (ii) the spectrum of $T$ in $(-\infty,a)$ is purely discrete and consists of \emph{at  least} $n'$ eigenvalues.
\end{enumerate}

\subsection{Comparing eigenvalues of star graphs}

The min-max principle has the following direct application to star graphs:
\begin{lemma}\label{lem0}
Let $\Gamma$ and $\Tilde \Gamma$ be star graphs with $\Gamma\subset\Tilde\Gamma$, then
for any $\alpha<0$ one has $T_{\Gamma,\alpha}\ge T_{\Tilde\Gamma,\alpha}$ using the identity map.
\end{lemma}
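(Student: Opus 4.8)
The plan is to invoke the abstract comparison criterion ``$T\le T'$ using $J$'' recalled in Section~\ref{sec-min-max}, with $J$ the identity map on $L^2(\RR^2)$. Concretely, to obtain $T_{\Gamma,\alpha}\ge T_{\tilde\Gamma,\alpha}$ I would set $T:=T_{\tilde\Gamma,\alpha}$, $T':=T_{\Gamma,\alpha}$ and $J=\mathrm{id}$, so that the two hypotheses to be checked reduce to the domain inclusion $D(t_{\Gamma,\alpha})\subset D(t_{\tilde\Gamma,\alpha})$ and the pointwise form bound $t_{\tilde\Gamma,\alpha}(u,u)\le t_{\Gamma,\alpha}(u,u)$ for every $u\in D(t_{\Gamma,\alpha})$.

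For the domain inclusion, note that $\Gamma\subset\tilde\Gamma$ means $\tilde\Gamma$ is obtained from $\Gamma$ by adjoining finitely many extra rays, each lying in the interior of one of the sectors $V_j$ of $\RR^2\setminus\Gamma$. A function $u\in H^1(\RR^2\setminus\Gamma)$ is by definition $H^1$ on each $V_j$, and subdividing a sector further along an interior ray preserves this regularity; since $\tilde\Gamma\setminus\Gamma$ is Lebesgue-null, one concludes $H^1(\RR^2\setminus\Gamma)\subset H^1(\RR^2\setminus\tilde\Gamma)$, that is, $D(t_{\Gamma,\alpha})\subset D(t_{\tilde\Gamma,\alpha})$.

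For the form bound I would compare the two summands separately. The Dirichlet terms coincide outright, since $\int_{\RR^2\setminus\Gamma}|\nabla u|^2\dd x$ and $\int_{\RR^2\setminus\tilde\Gamma}|\nabla u|^2\dd x$ differ only by the integral over the null set $\tilde\Gamma\setminus\Gamma$. For the jump terms, the key observation is that for $u\in H^1(\RR^2\setminus\Gamma)$ the jump $[u]$ vanishes on $\tilde\Gamma\setminus\Gamma$: each additional ray sits inside a sector $V_j$ on which $u$ is a genuine $H^1$ function, and the two-sided Sobolev traces of an $H^1$ function across an interior curve agree, so $[u]=0$ there $\dd\sigma$-a.e. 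Hence $\int_{\tilde\Gamma}|[u]|^2\dd\sigma=\int_\Gamma|[u]|^2\dd\sigma$, and the two forms in fact agree on the common domain $D(t_{\Gamma,\alpha})$. This yields the required inequality trivially (as an equality), and the comparison criterion delivers $T_{\Gamma,\alpha}\ge T_{\tilde\Gamma,\alpha}$ using the identity.

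The only genuinely nontrivial point—everything else being bookkeeping—is the vanishing of $[u]$ across the new rays, which I would justify by applying the Sobolev trace theorem within each sector $V_j$. This also clarifies why the sign of $\alpha$ is immaterial for this particular inequality: on the common domain the two forms are literally equal, and the ordering $T_{\Gamma,\alpha}\ge T_{\tilde\Gamma,\alpha}$ reflects only that $t_{\tilde\Gamma,\alpha}$ is defined on the larger domain $D(t_{\tilde\Gamma,\alpha})$, over which the infima defining $\Lambda_n$ can only decrease.
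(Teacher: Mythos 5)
Your proof is correct, but its key step is genuinely different from the paper's. The paper never shows that the jump vanishes on the added rays: writing $\Gamma=\bigcup_{j\in K}\Gamma_j$ with $K\subset\{1,\dots,M\}$, where $\Gamma_1,\dots,\Gamma_M$ are the branches of $\Tilde\Gamma$, it simply observes that the extra terms $\alpha\int_{\Gamma_j}\big|[u]\big|^2\dd\sigma$ for $j\notin K$ are non-positive because $\alpha<0$ and $|[u]|^2\ge 0$, so $t_{\Gamma,\alpha}(u,u)\ge t_{\Tilde\Gamma,\alpha}(u,u)$ follows in one line from the sign of $\alpha$, with no trace theory beyond the definition of the forms. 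You instead prove the sharper statement that $[u]=0$ $\dd\sigma$-a.e.\ on $\Tilde\Gamma\setminus\Gamma$ for every $u\in H^1(\RR^2\setminus\Gamma)$, so that the two forms actually coincide on the common domain $D(t_{\Gamma,\alpha})$. That is a genuine strengthening: it shows the ordering $T_{\Gamma,\alpha}\ge T_{\Tilde\Gamma,\alpha}$ using the identity holds for \emph{every} $\alpha\in\RR$, the inequality for the min-max values coming purely from the enlargement of the form domain. The price is that your argument rests on the matching of the two one-sided traces of an $H^1$ function across a curve interior to its domain --- equivalently, on the characterization $H^1(V)=\{u\in H^1(V\setminus R):\,[u]_R=0\}$ for a ray $R$ inside a sector $V$ --- which is standard (by density of functions smooth up to $R$, each sector being Lipschitz) but is exactly where the real work hides, and should be stated as such rather than only gestured at via ``the Sobolev trace theorem''; note also that $\Tilde\Gamma\setminus\Gamma$ contains the origin, which is harmless since it has zero arclength measure. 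For the lemma as stated the paper's route is shorter and entirely elementary; yours buys generality in $\alpha$ and the structural fact that the forms agree on the smaller domain.
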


\begin{proof}
Let $\Gamma_j$, $j=1,\dots M$, be the branches of $\Tilde \Gamma$, then
$\Gamma=\bigcup_{j\in K} \Gamma_j$ for some $K\subset\{1,\dots,M\}$.
We clearly have
\[
D(t_{\Gamma,\alpha})=H^1(\RR^2\setminus\Gamma)\subset H^1(\RR^2\setminus\Tilde\Gamma)=D(t_{\Tilde\Gamma,\alpha}),
\]
and for any $u\in D(t_{\Gamma,\alpha})$  there holds, due to $\alpha<0$ and $|[u]|^2\ge 0$,
\begin{align*}
t_{\Gamma,\alpha}(u,u)&=\int_{\RR^2\setminus \Gamma} |\nabla u|^2\dd x+\alpha\sum_{j\in K} \int_{\Gamma_j} \big|[u]\big|^2\dd\sigma\\
&\ge \int_{\RR^2\setminus \Tilde \Gamma} |\nabla u|^2\dd x+\alpha\sum_{j=1}^M \int_{\Gamma_j} \big|[u]\big|^2\dd\sigma=t_{\Tilde\Gamma,\alpha}(u,u),
\end{align*}
which is the sought inequality.
\end{proof}

\subsection{One dimensional $\delta'$-potential}\label{delta'-section}
Consider the following  closed, lower semibounded, densely defined, symmetric sesquilinear form $b$ in $L^2(\RR)$,
\begin{equation}\label{one-d-delta'}
b(f,f)=\int_{\mathbb{R}}\lvert f'\rvert^{2}\dd x-\lvert f(0+)-f(0-)\rvert^{2},\quad D(b)=H^{1}(\mathbb{R}\setminus\{0\}),
\end{equation}
and let $B$ be the self-adjoint operator in $L^2(\RR)$ associated with $b$. One easily shows that
$B$ acts as $f\mapsto -f''$ (in the sense of distributions on $\RR\setminus\{0\}$)  with domain
\[
D(B)=\big\{ f\in H^{2}(\mathbb{R}\setminus\{0\}):\ f'(0+)=f'(0-),\ f(0+)-f(0-)=-f'(0+)\big\},
\]
which is the Schr\"odinger operator with an attractive $\delta'$-potential of intensity $1$ at the origin, see \cite{AGHH}.
An easy direct computation shows that the only discrete eigenvalue of $B$ is $E_{1}(B)=-4$ with the corresponding eigenfunction
\begin{equation*}
\psi_{1}(x)=\mathrm{sgn}(x)e^{-2\lvert x\rvert}=
\begin{cases}
-e^{2x}, &x<0,
\\
e^{-2x}, &x>0.
\end{cases}
\end{equation*}

\subsection{Robin Laplacian in a sector}\label{sec-robin}

For $\theta\in(0,\pi)$ consider the infinite sector
\begin{equation}\label{equt}
U_\theta:=\big\{(x,y): |\arg(x+iy)|<\theta\big\},
\end{equation}
see Fig.~\ref{fig1}(b). For $\gamma>0$ consider the closed, lower semibounded, densely defined, symmetric sesquilinear form
\begin{equation}\label{magdaQF2}
	q_{\theta}^{\gamma}(v,v)=\int_{U_\theta}|\nabla v|^2\dd x \dd y-\gamma\int_{\partial U_\theta}|v|^2\dd\sigma,
	\quad v\in H^{1}(U_\theta),
\end{equation}
and denote by $Q^\gamma_\theta$ the self-adjoint operator in $L^2(U_\theta)$ generated by $q^\gamma_\theta$.
Remark again the unitary equivalence
\begin{equation}
	 \label{unit3}
Q^\gamma_\theta\simeq \gamma^2 Q_\theta, \quad Q_\theta:=Q^1_\theta.
\end{equation}
The following results were obtained in \cite{kp18,lp}:
\begin{enumerate}
	\item For any $\gamma>0$ and $\theta\in(0,\pi)$ one has
	\[
	\specess Q^\gamma_\theta=[-\gamma^2,\infty),
	\quad
	\specd Q^\gamma_\theta \text{ is at most finite,}
	\]
	\item Let $\gamma>0$, then $\specd Q^\gamma_\theta$ is empty for $\theta\ge \frac{\pi}{2}$
	and non-empty for $\theta<\frac{\pi}{2}$,
	\item Let $\gamma>0$. For any $n\in\NN$ there exists $\theta_n>0$ such that $Q^\gamma_\theta$ has $n$ discrete eigenvalues in $(-\infty,-\gamma^2)$ for any $\theta\in(0,\theta_n)$, and
	\[
	E_n(Q^\gamma_\theta)=-\frac{\gamma^{2}}{(2n-1)^{2}\theta^{2}}+O(1)\text{ as }\theta\to 0^+.
	\]
	Due to the unitary equivalence \eqref{unit3} the numbers $\theta_n$ can be chosen independent of $\gamma>0$.
\end{enumerate}
A family of eigenvalues and eigenfunctions of $Q^\gamma_\theta$ was recently obtained in an explicit form~\cite{lyal}, but it remains open if this family exhausts the whole discrete spectrum.

\subsection{Schr\"odinger operator with a $\delta$-potential on a broken line}\label{sec-delta}

For $\theta\in(0,\frac{\pi}{2})$ consider again the broken line $\Gamma_\theta$ defined in \eqref{eqgt}.
For $\gamma>0$ consider the following closed, lower semibounded, symmetric sesquilinear form $a^\gamma_\theta$ in $L^2(\RR^2)$,
\[
a^\gamma_\theta(u,u)=\int_{\RR^2} |\nabla u|^2\dd x-\gamma\int_{\Gamma_\theta}|u|^2\dd\sigma,
\quad D(a^\gamma_\theta)=H^1(\RR^2),
\]
and denote by $A^\gamma_\theta$ the associated self-adjoint operator in $L^2(\RR^2)$, then it is known from~\cite{ei} that for any $\gamma>0$ and any $\theta\in (0,\frac{\pi}{2})$ there holds
\[
\specess A^\gamma_\theta=\big[ -\tfrac{1}{4}\gamma^2,\infty\big),
\quad
\specd A^\gamma_\theta \text{ is non-empty.}
\]
Remark that an easy proof of the non-emptyness of the discrete spectrum was given in \cite{p17}.

\section{The essential spectrum}\label{secess}

In this section we prove all statements of Theorem \ref{thm1} concerning the essential spectrum.
In view of the unitary equivalence \eqref{unit1} it is sufficient to prove them for the case $\alpha=-1$, i.e. for the operator $T_\Gamma$ only.

\begin{lemma}\label{lem1}
	For any star graph $\Gamma$ there holds $[-4,\infty)\subset \specess T_\Gamma$.
\end{lemma}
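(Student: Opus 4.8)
The goal is to show every point $\lambda \geq -4$ is in the essential spectrum of $T_\Gamma = T_{\Gamma,-1}$. Recall that $B$, the 1D $\delta'$ operator, has a single eigenvalue $-4$ at the origin. The natural strategy is to build a family of approximate eigenfunctions (a Weyl sequence) for each $\lambda \geq -4$ and show they have no convergent subsequence, which forces $\lambda$ into the essential spectrum.

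The idea I would pursue. Let me fix one branch, say $\Gamma_1 = \{(r,\theta_1): r \geq 0\}$, and work in a tubular strip around its interior, far from the origin. Locally (away from the vertex), $\Gamma$ looks exactly like a single straight line, and $T_\Gamma$ decouples into the transverse 1D $\delta'$-operator $B$ (giving energy $-4$) plus the free Laplacian along the direction of $\Gamma_1$ (giving energy $[0,\infty)$). This is precisely why the threshold is $-4$ and why the essential spectrum fills $[-4,\infty)$. So I would write $\lambda = -4 + k^2$ for some $k \geq 0$, and seek quasimodes of the tensor form
\begin{equation*}
u_n(s,t) = \chi_n(s)\, e^{iks}\, \psi_1(t),
\end{equation*}
where $t$ is the normal coordinate, $s$ is the arclength along $\Gamma_1$, $\psi_1(t) = \sgn(t)e^{-2|t|}$ is the transverse eigenfunction of $B$, and $\chi_n$ is a sequence of longitudinal cutoffs whose supports march off to $r = +\infty$ along $\Gamma_1$ (e.g. $\chi_n$ supported in $s \in [n, 2n]$, normalized appropriately). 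Because the supports escape to infinity, the $u_n$ converge weakly to zero, hence cannot contain a norm-convergent subsequence; this is the mechanism that distinguishes \emph{essential} from \emph{discrete} spectrum.

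The two things I must check. First, for $n$ large the support of $u_n$ lies in the region where $\Gamma$ coincides with a single straight line (no other branch and no vertex interfere), so $u_n \in D(t_\Gamma)$ genuinely satisfies the correct $\delta'$-transmission conditions on $\Gamma_1$ and equals the free Laplacian elsewhere. Second, I must estimate $\|(T_\Gamma - \lambda)u_n\|$. Applying $-\Delta$ in the $(s,t)$ coordinates, the transverse part produces exactly the eigenvalue $-4$ against $\psi_1$, the factor $e^{iks}$ produces $k^2 = \lambda + 4$, and the only error terms come from derivatives hitting the slowly-varying cutoff $\chi_n$, namely terms involving $\chi_n'$ and $\chi_n''$. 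With the standard choice $\|\chi_n'\|, \|\chi_n''\| \to 0$ relative to $\|\chi_n\|$ (plateau cutoffs on an interval of length growing like $n$), these error terms are $O(1/n)$ and vanish. Hence $\|(T_\Gamma-\lambda)u_n\|/\|u_n\| \to 0$ with $u_n \rightharpoonup 0$, which is exactly the singular-Weyl-sequence criterion for $\lambda \in \specess T_\Gamma$. Since $\lambda = -4 + k^2$ ranges over $[-4,\infty)$ as $k$ ranges over $[0,\infty)$, the lemma follows.

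The main obstacle. The delicate point is the \emph{rigorous} implementation near the interface: I must confirm that the tensor ansatz $\chi_n(s)e^{iks}\psi_1(t)$, when transported into genuine Cartesian coordinates around a straight half-line, lands in $H^1(\RR^2 \setminus \Gamma)$ with the correct jump behavior, and that applying the operator reproduces $B$ in the transverse variable exactly (not merely approximately). Since $\Gamma_1$ is a straight ray, the coordinate change $(s,t) \leftrightarrow (x,y)$ is a rigid rotation, so the Laplacian separates cleanly and no curvature terms arise — this is the feature that makes the straight-branch case tractable. The only genuine care is ensuring the support of $\chi_n$ stays bounded away from the origin so that the jump $[\,u_n]$ is supported solely on $\Gamma_1$ and the transmission condition $[\,u_n] = \partial_N u_n$ (with $\alpha = -1$) holds there by the choice of $\psi_1$. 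Everything else reduces to the routine cutoff estimates sketched above.
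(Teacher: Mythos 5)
Your strategy coincides with the paper's: a Weyl sequence of tensor form $e^{iks}\psi_1(t)$, localized by cutoffs marching to infinity along one branch, with the error coming only from derivatives of the cutoffs ($O(1)$ in $L^2$ against $\|u_n\|^2\gtrsim n$). However, there is a genuine gap: your ansatz has \emph{no cutoff in the transverse variable} $t$, so $\supp u_n$ is the full infinite strip $\{s\in[n,2n],\ t\in\RR\}$. Your claim that ``for $n$ large the support of $u_n$ lies in the region where $\Gamma$ coincides with a single straight line'' is therefore false whenever some other branch $\Gamma_j$ makes an angle less than $\tfrac{\pi}{2}$ with $\Gamma_1$: such a branch crosses the strip for \emph{every} $n$ (and for, say, $M\ge 5$ equally spaced branches this cannot be avoided by choosing a different reference branch). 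On a crossed branch $u_n$ is smooth, so $[u_n]=0$, but its normal derivative $\partial_N u_n$ is nonzero there (exponentially small, of order $e^{-2n\tan\theta_j}$, but not zero); hence the transmission condition $-[u_n]=\partial_N u_n$ fails on $\Gamma_j$ and $u_n\notin D(T_\Gamma)$. The singular-Weyl-sequence criterion you invoke requires membership in the \emph{operator} domain, not merely in the form domain $H^1(\RR^2\setminus\Gamma)$ (which your $u_n$ does belong to, since its jump set lies in $\Gamma_1$), so the argument as written breaks down for general star graphs; it is sound only for $M=1$ or when all other branches point into the closed left half-plane.

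The repair is exactly the paper's device: multiply by a second cutoff $\widetilde\chi_n(t)$ supported in $|t|\le an$, with the aspect ratio $a>0$ fixed small enough (e.g. $a<\min_j|\tan\varphi_j|$) that the rectangle $[n,2n]\times[-an,an]$ stays away from the vertex and from all other branches for large $n$. Then the only interface met by $\supp f_n$ is a piece of $\Gamma_1$, where the transmission conditions hold by the choice of $\psi_1$ — a fact that still requires verification via integration by parts against arbitrary elements of the form domain (the paper devotes Appendix B to precisely this); and the additional error terms produced by $\widetilde\chi_n'$, $\widetilde\chi_n''$ live in $\{an-1\le|t|\le an\}$, where $\psi_1$ is of size $e^{-2(an-1)}$, so they stay $O(1)$ in $L^2$ and the Weyl quotient still vanishes. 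Alternatively you could keep your ansatz and use a quadratic-form version of the Weyl criterion, but then you must control $t_\Gamma(u_n,v)-\lambda\langle u_n,v\rangle$ uniformly over $v$ in the form domain, and integrating by parts produces boundary terms $\int_{\Gamma_j}\partial_N u_n\,\overline{[v]}\,\dd\sigma$ on every crossed branch that you would need to estimate — more work than adding the cutoff. A final simplification: your weak-convergence mechanism is correct but unnecessary; once one knows $[-4,\infty)\subset\spec T_\Gamma$, this half-line lies in $\specess T_\Gamma$ simply because it contains no isolated points, which is how the paper concludes.
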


\begin{proof}
The proof is quite standard and uses Weyl sequences.
	As the rotation of $\Gamma$ around the origin produces a unitary equivalent operator (with the same essential spectrum), without loss of generality we assume $\theta_1=0$, i.e. that the first branch of $\Gamma$ coincides with the positive $x$-half-axis.

	Pick a function $\Phi\in C^{\infty}(\mathbb{R})$ with $0\le\Phi\le 1$ such that
$\Phi(t)=0$  for $t\leq0$, $\Phi(t)=1$ for $t\geq1$,
and for a suitable (to be defined later) $a>0$ consider the functions
\[
\chi_{n}:\  x\mapsto \Phi(2n-x)\Phi(x-n),\quad
\widetilde{\chi}_{n}:\ y\mapsto \Phi(an-y)\Phi(y+an).
\]
Then it is clear that $\chi_{n},\widetilde{\chi}_{n}\in C_{c}^{\infty}(\mathbb{R})$ with
$0\leq\chi_{n},\widetilde{\chi}_{n}\leq1$, and
		\begin{align*}
			\chi_{n}(x)&=0\text{ if }x\in(-\infty,n)\cup(2n,\infty),
			\\
			\chi_{n}(x)&=1\text{ if }x\in(n+1,2n-1),
			\\
			\widetilde{\chi}_{n}(y)&=0\text{ if }y\in(-\infty,-an)\cup(an,\infty),
			\\
			\widetilde{\chi}_{n}(y)&=1\text{ if }y\in(-an+1,an-1).
		\end{align*}
Let $k\in\RR$ and define, for large $n\in\NN$,
\begin{equation}\label{weylseq}
	f_{n}:(x,y)\mapsto e^{ikx}\psi_1(y)\chi_{n}(x)\widetilde{\chi}_{n}(y),
\end{equation}
where $\psi_1$ is the first eigenfunction of the one-dimensional $\delta'$-potential from Subsection \ref{delta'-section}.
In a more detailed form, $f_n(x,y)=f_n^\pm(x,y)$ for $\pm y>0$ with
\[
	f_{n}^{+}(x,y):=e^{ikx-2y}\chi_{n}(x)\widetilde{\chi}_{n}(y),
	\quad
	f_{n}^{-}(x,y):=-e^{ikx+2y}\chi_{n}(x)\widetilde{\chi}_{n}(y).
\]
The support of $f_n$ is in the rectangle surrounded by the dashed line in Fig.~\ref{Fig3}, and from now on we assume that the number $a>0$ in the above constructions is sufficiently small, so that the rectangle
does not touch the other branches of $\Gamma$ for large enough $n$.

\begin{figure}[b]
	\centering
\begin{tabular}{lp{90mm}}	
	\parbox[c]{40mm}{\begin{tikzpicture}
		\coordinate (b) at (0.5,-1);
		\coordinate (c) at (2.8,1);
		\coordinate (d) at (1.05,-1.7);
		\draw [domain=0:0.65, line width=3.0pt] plot (\x,3.2*\x);
		\draw [dashed] (0.5,-1) rectangle ++(2.3,2);
		\draw [draw=black] (0.8,-0.7) rectangle ++(1.7,1.4);
		\draw [draw=black, fill=lightgray] (0.8,-0.7) rectangle ++(1.7,1.4);
		\draw (b) node[below] {\tiny ($n$,-$an$)};
		\draw (c) node[above] {\tiny ($2n$,$an$)};
		\filldraw[black] (b) circle (1pt);
		\filldraw[black] (c) circle (1pt);
		\draw [->,color=black, line width=0.5mm] (-0.5,0) -- (3.5,0);
		\draw [->,color=black, line width=0.5mm] (0,-1) -- (0,2);
		\draw [domain=0:3, line width=3.0pt] plot (\x,0*\x);
	\end{tikzpicture}}
	&
	\parbox[c]{85mm}{\caption{The product of the cut-off functions vanishes outside the dashed rectangle and is equal to one in the shaded area.}\label{Fig3}}
\end{tabular}
\end{figure}
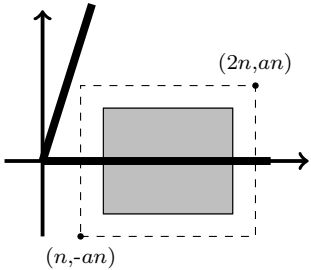

One has $f_n\in D(T_{\Gamma})$ with $(T_{\Gamma}f_n)(x,y)=-\Delta f_n^\pm(x,y)$ for $\pm y>0$ (see Appendix \ref{domainapp}). For $(x,y)\in (n+1,2n-1)\times (-an+1,an-1)$ there holds $|f_{n}(x,y)|=e^{-2|y|}$, hence,
for large $n$ one has
\begin{align*}
		\lVert f_{n}\rVert_{L^{2}(\mathbb{R}^{2})}^{2}&\ge\int_{n+1}^{2n-1}\int_{-an+1}^{an-1}
		e^{-4|y|}\dd y \dd x		\\
		&=2(n-2) \int_{0}^{an-1}e^{-4y}\dd y=2 (n-2) \, \dfrac{1-e^{4(1-an)}}{4}\geq Cn
\end{align*}
with a suitable $C>0$. We further compute
\begin{align*}
-\Delta f_{n}^{+}=(k^{2}-4)f^{+}_{n}&+(4\widetilde\chi_{n}'(y)-\widetilde\chi_{n}''(y))\chi_{n}(x)e^{ikx-2y}
\\
&-\big(2ik\chi'_{n}(x)+\chi''_{n}(x)\big)\widetilde{\chi}_{n}(y)e^{ikx-2y}.
\end{align*}
The support of $g_{n}:=-\Delta f_{n}^{+}-(k^{2}-4)f_{n}^{+}$ is contained in the set
\[
\Big(\big ([n,n+1]\cup[2n-1,2n]\big)\times[0,an] \Big)\cup\Big([n+1,2n-1]\times[an-1,an]\Big)
\]
and one has the pointwise estimate $|g_n(x,y)|\le c e^{-2|y|}$ with a suitable $c>0$. Hence, with a suitable $c_1>0$ and all large $n$,
\begin{equation*}
	\begin{aligned}
		\lVert-&\Delta f_{n}^{+}-(k^{2}-4)f_{n}^{+}\rVert^{2}_{L^{2}(\RR^2_+)}=\lVert g_n\rVert^{2}_{L^{2}(\RR^2_+)}
		\\
		&\leq c\left(\int_{[n,n+1]\cup[2n-1,2n]}\int_{0}^{an}e^{-4y}\dd y\dd x+\int_{n+1}^{2n-1}\int_{an-1}^{an}e^{-4y}\dd y\dd x\right)
		\\
		&=\frac{c}{4}\Big(2+\big((n-2)e^{4}-n\big)e^{-4an}\Big)\leq c_{1},
	\end{aligned}
\end{equation*}
and analogously one estimates $\lVert-\Delta f_{n}^- -(k^{2}-4)f_{n}^-\rVert^{2}_{L^{2}(\RR^2_-)}\le c_2$. Then, with $(T_\Gamma f_n)(x,y)=-\Delta f_n^\pm(x,y) \text{ for } \pm y>0$, we get
\begin{align*}
\dfrac{\|-\Delta f_{n}^{+}-(k^{2}-4)f_{n}^{+}\|^2_{L^{2}(\RR^2_+)}+\lVert-\Delta f_{n}^{-}-(k^{2}-4)f_{n}^{-}\rVert^{2}_{L^{2}(\RR^2_-)}}{\|f_{n}\|^2_{L^2(\RR^2)}}
\\
\equiv\dfrac{\Big\|\Big(T_\Gamma-(k^2-4)\Big)f_n\Big\|^2_{L^2(\RR^2)}}{\|f_{n}\|^2_{L^2(\RR^2)}}\le \dfrac{c_1+c_2}{Cn}\xrightarrow{n\to\infty}0,
\end{align*}
which shows $k^2-4\in\spec T_\Gamma$. As $k\in\RR$ is arbitrary, one obtains the inclusion $[-4,\infty)\subset\spec T_\Gamma$. As $[-4,\infty)$ has no isolated points, one even has $[-4,\infty)\subset\specess T_\Gamma$.
\end{proof}

In order to continue we compare $T_\Gamma$ with the Robin Laplacians in infinite sectors.
Assume first that $M\ge 2$, consider the sectors $V_j$ defined in \eqref{eqvj}, and for $\gamma>0$
define the closed, lower semibounded, symmetric sesquilinear forms $r_j^\gamma$ in $L^2(V_j)$ by
\begin{equation}
r_j^\gamma(u,u)=\int_{V_j}|\nabla u|^2\dd x-\gamma\int_{\partial V_j} |u|^2\dd\sigma,
\quad D(r_j^\gamma)=H^1(V_j),
\end{equation}
and the associated self-adjoint operators $R^\gamma_j$ in $L^2(V_j)$. Remark that for any $j$ one has
$\partial V_j=\Gamma_j\cup\Gamma_{j+1}$.
Further consider the unitary map
\begin{equation}
\label{eqjj}
J:L^2(\RR^2)\ni u\mapsto (u_1,\dots,u_M)\in \bigoplus_{j=1}^M L^2(V_j),
\quad
u_j:=u|_{V_j}.
\end{equation}
\begin{lemma}\label{lem2}
	For any star graph $\Gamma$ with $M\ge 2$ branches there holds
\[
T_\Gamma\ge \bigoplus_{j=1}^M R^2_j \text{ using } J
\]
with the relation $\ge$ defined as in Section \ref{sec-min-max}.
\end{lemma}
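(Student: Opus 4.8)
The plan is to verify directly the three conditions defining the relation ``$\bigoplus_{j=1}^M R^2_j\le T_\Gamma$ using $J$'' from Section~\ref{sec-min-max} (which is exactly what the stated inequality $T_\Gamma\ge\bigoplus_j R^2_j$ means), with the roles $\cH'=L^2(\RR^2)$ carrying $T_\Gamma$ and $\cH=\bigoplus_j L^2(V_j)$ carrying $\bigoplus_j R^2_j$. The map $J$ of \eqref{eqjj} is already unitary, hence in particular a linear isometry, so the first requirement is free. For the domain inclusion, if $u\in D(t_{\Gamma,-1})=H^1(\RR^2\setminus\Gamma)$ then each restriction $u_j:=u|_{V_j}$ lies in $H^1(V_j)=D(r^2_j)$, so $Ju=(u_1,\dots,u_M)$ lies in $D\big(\bigoplus_j r^2_j\big)$. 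Everything thus reduces to the form inequality
\[
\sum_{j=1}^M r^2_j(u_j,u_j)\le t_{\Gamma,-1}(u,u)\quad\text{for all }u\in H^1(\RR^2\setminus\Gamma).
\]

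First I would split each side into a Dirichlet part and a boundary part. Since the sectors $V_1,\dots,V_M$ tile $\RR^2\setminus\Gamma$ up to a null set, the gradient terms coincide exactly, $\sum_j\int_{V_j}|\nabla u_j|^2\dd x=\int_{\RR^2\setminus\Gamma}|\nabla u|^2\dd x$, so the inequality becomes equivalent to the boundary statement
\[
\int_\Gamma\big|[u]\big|^2\dd\sigma\le 2\sum_{j=1}^M\int_{\partial V_j}|u_j|^2\dd\sigma,
\]
where the intensity $\alpha=-1$ on the left is matched against the Robin parameter $\gamma=2$ on the right.

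The key step is to reorganise the right-hand side branch by branch. Because $M\ge 2$, each branch $\Gamma_k$ is the common boundary of exactly the two adjacent sectors $V_{k-1}$ and $V_k$ (cyclic convention $V_0:=V_M$), so in $\sum_j\int_{\partial V_j}|u_j|^2\dd\sigma$ it is counted precisely twice and contributes the two one-sided Sobolev traces of $u$ on $\Gamma_k$; denote them $u_k^{+}:=u_k|_{\Gamma_k}$ and $u_k^{-}:=u_{k-1}|_{\Gamma_k}$. By the very definition of the jump one has $\big|[u]\big|=|u_k^{+}-u_k^{-}|$ on $\Gamma_k$, so after summing over $k$ and integrating the desired inequality follows from the elementary pointwise bound $|a-b|^2\le 2\big(|a|^2+|b|^2\big)$ applied with $a=u_k^{+}(s)$, $b=u_k^{-}(s)$.

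The proof carries essentially no analytic difficulty: the trace theory required is the same already used to give meaning to $[u]$, and the comparison is driven purely by a Cauchy--Schwarz-type estimate. The only point demanding care is the boundary bookkeeping, namely checking that for $M\ge 2$ every branch is counted exactly twice, once from each adjacent sector, and identifying the two contributing traces with $u_k^{\pm}$. It is worth recording that the constant $2$ in $|a-b|^2\le 2(|a|^2+|b|^2)$ is optimal (equality when $a=-b$), which is precisely what forces the Robin parameter $\gamma=2$ and hence the threshold $-\gamma^2=-4$ consistent with $\specess T_\Gamma$ from Lemma~\ref{lem1}.
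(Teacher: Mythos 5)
Your proof is correct and follows essentially the same route as the paper: both reduce the claim to the form inequality, note that the gradient terms match exactly, decompose the jump integral branch by branch into the two one-sided traces from the adjacent sectors, apply $|a-b|^2\le 2(|a|^2+|b|^2)$, and regroup the resulting boundary terms sector by sector to recognize the Robin forms $r^2_j$. Your added remark on the sharpness of the constant $2$ (hence the parameter $\gamma=2$ and the threshold $-4$) is a pleasant observation but does not alter the argument.
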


\begin{proof}
Using the notation \eqref{eqjj}, for any $u\in H^1(\RR^2\setminus\Gamma)$ one obtains with the help of the triangle inequality
\begin{align*}
	\int_\Gamma \big|[u]\big|^2\dd\sigma&=\sum_{j=1}^M \int_{\Gamma_j} |u_j-u_{j-1}|^2\dd\sigma\le 2 \sum_{j=1}^M \int_{\Gamma_j} \big( |u_j|^2+ |u_{j-1}|^2\big)\dd\sigma.
\end{align*}
Hence, for the sesquilinear form $t_\Gamma$ for $T_\Gamma$,
\begin{align*}
t_\Gamma(u,u)=\int_{\RR^2\setminus \Gamma} |\nabla u|^2\dd x - \int_\Gamma \big|[u]\big|^2\dd\sigma,\quad
D(t_\Gamma)=H^1(\RR^2\setminus \Gamma),
\end{align*}
one has
\begin{align*}
t_\Gamma(u,u)&=\sum_{j=1}^M \int_{V_j}|\nabla u|^2	\dd x -\int_\Gamma \big|[u]\big|^2\dd\sigma\\
&\ge \sum_{j=1}^M \int_{V_j}|\nabla u|^2\dd x - 2 \sum_{j=1}^M \int_{\Gamma_j} \big( |u_j|^2+ |u_{j-1}|^2\big)\dd\sigma\\
&=\sum_{j=1}^M  \bigg(\int_{V_j}|\nabla u_j|^2\dd x -2 \int_{\Gamma_j}|u_j|^2\dd\sigma -2 \int_{\Gamma_{j+1}}|u_j|^2\dd\sigma
\bigg)\\
&=\sum_{j=1}^M  \bigg(\int_{V_j}|\nabla u_j|^2\dd x -2 \int_{\partial V_j}|u_j|^2\dd\sigma\bigg)=\sum_{j=1}^M r^2_j(u_j,u_j),
\end{align*}
and the last expression is exactly the sesquilinear form for  $\bigoplus_{j=1}^M R^2_j$.
\end{proof}

\begin{lemma}\label{lem3}
For any star graph $\Gamma$ there holds $\specess T_\Gamma\subset[-4,\infty)$.
\end{lemma}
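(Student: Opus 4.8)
The plan is to combine Lemma~\ref{lem2} with the known bottom of the essential spectrum of the Robin Laplacian on a sector and with consequence~(1) of the min-max comparison recalled in Section~\ref{sec-min-max}. Together with Lemma~\ref{lem1} this will pin down $\specess T_\Gamma=[-4,\infty)$, but here only the inclusion $\specess T_\Gamma\subset[-4,\infty)$, i.e. $\inf\specess T_\Gamma\ge -4$, is needed.

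First I would treat $M\ge 2$. Each sector $V_j$ is, up to a rotation of $\RR^2$ about the origin, a copy of the model sector $U_{\varphi_j}$ of half-opening $\varphi_j\in(0,\pi)$; since rotations are isometries preserving both $|\nabla\cdot|^2$ and the arclength on the boundary, the operator $R_j^2$ is unitarily equivalent to $Q_{\varphi_j}^2$. By the results recalled in Section~\ref{sec-robin} (with $\gamma=2$) one has $\specess R_j^2=\specess Q_{\varphi_j}^2=[-4,\infty)$ for every $j$. As the essential spectrum of a finite orthogonal sum is the union of the essential spectra of the summands,
\[
\specess\bigoplus_{j=1}^M R_j^2=\bigcup_{j=1}^M\specess R_j^2=[-4,\infty),
\]
so $\inf\specess\bigoplus_j R_j^2=-4$. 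By Lemma~\ref{lem2} we have $\bigoplus_j R_j^2\le T_\Gamma$ using $J$, hence consequence~(1) of Section~\ref{sec-min-max} gives $-4=\inf\specess\bigoplus_j R_j^2\le\inf\specess T_\Gamma$, which is exactly the claimed inclusion.

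It remains to dispose of $M=1$, which is excluded from Lemma~\ref{lem2}. Here I would reduce to the previous case by monotonicity in the graph: pick any star graph $\Tilde\Gamma\supset\Gamma$ with two branches. By Lemma~\ref{lem0} we have $T_\Gamma\ge T_{\Tilde\Gamma}$ using the identity map, and by the case $M=2$ just proved $\specess T_{\Tilde\Gamma}\subset[-4,\infty)$, i.e. $\inf\specess T_{\Tilde\Gamma}\ge -4$. Applying consequence~(1) once more yields $\inf\specess T_\Gamma\ge\inf\specess T_{\Tilde\Gamma}\ge -4$, again giving $\specess T_\Gamma\subset[-4,\infty)$.

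I do not expect any genuinely hard step here: the identification $R_j^2\simeq Q_{\varphi_j}^2$ via rotation and the union formula for the essential spectrum of a finite direct sum are routine. The one point demanding care is the \emph{direction} of the comparisons. Since $T\le T'$ makes the \emph{smaller} operator (here the orthogonal sum of Robin Laplacians, respectively $T_{\Tilde\Gamma}$) carry the bottom of the essential spectrum, consequence~(1) bounds $\inf\specess T_\Gamma$ from \emph{below}, which is precisely what the inclusion $\specess T_\Gamma\subset[-4,\infty)$ requires; reversing the roles would produce the useless opposite inequality.
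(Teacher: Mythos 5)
Your proof is correct and follows essentially the same route as the paper: Lemma~\ref{lem2} plus the unitary equivalence $R_j^2\simeq Q_{\varphi_j}^2$ for $M\ge 2$, and Lemma~\ref{lem0} to reduce $M=1$ to a larger graph. The only (harmless) cosmetic difference is that the paper extends the half-line specifically to a line, while you extend it to an arbitrary two-branch star graph and invoke the case just proved.
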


\begin{proof}
We need to show the inequality $\inf\specess T_\Gamma\ge -4$.

Consider first the case of $\Gamma$ with $M\ge 2$ branches. Each sector $V_j$  in \eqref{eqvj} is a rotated copy of the sector $U_{\varphi_j}$ from \eqref{equt}, and it follows
by standard arguments that $R^\gamma_j$ is unitarily equivalent to the standard Robin Laplacian $Q^\gamma_{\varphi_j}$ from Subsection~\ref{sec-robin}, which implies $\specess R^\gamma_j=\specess Q^\gamma_{\varphi_j}=[-\gamma^2,\infty)$.
By Lemma \ref{lem2} and the min-max principle (Subsection \ref{sec-min-max}) there holds
\[
\inf \specess T_\Gamma\ge \inf\specess \bigoplus_{j=1}^M R^2_j\equiv \min_{j} \inf \specess R^2_j=\min_j (-2^2)=-4.
\]

If $\Gamma$ only contains one branch, i.e. if $\Gamma$ is just a half-line starting at the origin,
we extend it to a line $\Tilde\Gamma$, then $T_\Gamma\ge T_{\Tilde \Gamma}$ using the identity map (Lemma \ref{lem0})
implying $\inf\specess T_\Gamma\ge \inf\specess T_{\Tilde \Gamma}=-4$.
\end{proof}

By combining Lemmas \ref{lem1} and \ref{lem3} one arrives at
\begin{corollary}
	For any star graph $\Gamma$ there holds $\specess T_\Gamma=[-4,\infty)$.
\end{corollary}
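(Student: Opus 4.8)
The plan is to establish the set equality $\specess T_\Gamma=[-4,\infty)$ by proving the two inclusions separately, each of which is already furnished by one of the preceding lemmas. Lemma~\ref{lem1} supplies the inclusion $[-4,\infty)\subset\specess T_\Gamma$, while Lemma~\ref{lem3} supplies the reverse inclusion $\specess T_\Gamma\subset[-4,\infty)$; intersecting the two yields the claimed equality for every star graph, with no case distinction required at this final stage.

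For the inclusion $[-4,\infty)\subset\specess T_\Gamma$ I would rely on the singular Weyl sequence constructed in Lemma~\ref{lem1}. The idea is that the one-dimensional $\delta'$-potential $B$ has ground state $\psi_1(y)=\sgn(y)e^{-2|y|}$ at energy $-4$, so tensoring $\psi_1$ with a plane wave $e^{ikx}$ along one branch (normalized to lie on the positive $x$-axis) produces an approximate eigenfunction at energy $k^2-4$. Localizing this product by cut-offs $\chi_n\widetilde\chi_n$ whose support is pushed off to infinity along the branch, and shrinking the transverse width so that the support avoids all other branches, gives a Weyl sequence showing $k^2-4\in\spec T_\Gamma$. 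Letting $k$ range over $\RR$ covers all of $[-4,\infty)$, and since this interval has no isolated points the spectral values are in fact essential.

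For the reverse inclusion $\specess T_\Gamma\subset[-4,\infty)$, equivalently $\inf\specess T_\Gamma\ge-4$, the key is the comparison in Lemma~\ref{lem2}. When $M\ge2$ one has $T_\Gamma\ge\bigoplus_{j=1}^M R^2_j$ using the decomposition map $J$, where each Robin operator $R^2_j$ on the sector $V_j$ is unitarily equivalent to $Q^2_{\varphi_j}$ with $\specess R^2_j=[-4,\infty)$; the min-max monotonicity of Subsection~\ref{sec-min-max} then forces $\inf\specess T_\Gamma\ge\min_j\inf\specess R^2_j=-4$. The degenerate case $M=1$ is handled by extending the half-line to a full line $\Tilde\Gamma$ and invoking $T_\Gamma\ge T_{\Tilde\Gamma}$ from Lemma~\ref{lem0}, which reduces matters to the already-settled line.

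Since the genuine work has already been carried out inside Lemmas~\ref{lem1}--\ref{lem3}, there is no substantive obstacle remaining at the level of the corollary: the only point to verify is that the two inclusions are stated for \emph{arbitrary} star graphs and therefore combine without any restriction on $M$ or the angles $\theta_j$. The hardest part of the overall argument lives in Lemma~\ref{lem1} (ensuring the localized test functions genuinely lie in $D(T_\Gamma)$ and that the error terms decay like $1/n$) and in Lemma~\ref{lem3} (the unitary reduction of $R^\gamma_j$ to the model Robin Laplacian $Q^\gamma_{\varphi_j}$), both of which I may invoke directly at this stage.
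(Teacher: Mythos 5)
Your proposal is correct and coincides with the paper's own proof: the corollary is obtained precisely by combining Lemma~\ref{lem1} (which gives $[-4,\infty)\subset\specess T_\Gamma$ via the Weyl sequence built from $\psi_1$) with Lemma~\ref{lem3} (which gives $\inf\specess T_\Gamma\ge-4$ via the Robin-sector comparison for $M\ge 2$ and the line extension for $M=1$). Your recapitulation of the two lemmas' internal arguments is also accurate, so nothing further is needed.
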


\section{Discrete eigenvalues for star graphs}\label{secdisc}

\subsection{Cardinality of the discrete spectrum}

We now discuss the existence and non-existence of discrete eigenvalues of $T_{\Gamma,\alpha}$. Again, due to the unitary equivalence \eqref{unit1} it is sufficient to consider the operator $T_\Gamma$ only.
We first consider the degenerate cases.
\begin{lemma}\label{lem4}
If $\Gamma$ is a half-line or a line, then $T_\Gamma$	has no discrete spectrum.
\end{lemma}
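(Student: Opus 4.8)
The plan is to exploit that $\specess T_\Gamma=[-4,\infty)$ has already been established (for every star graph) in the previous subsection, so that the discrete spectrum of $T_\Gamma$ consists precisely of its eigenvalues lying in $(-\infty,-4)$. It therefore suffices to show that $T_\Gamma$ has no eigenvalue below $-4$, equivalently that $\Lambda_n(T_\Gamma)=-4$ for all $n\in\NN$.

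First I would treat the line. After a rotation we may assume $\Gamma=\{(x,0):x\in\RR\}$, so that the jump $[u](x)=u(x,0+)-u(x,0-)$ acts only in the $y$-variable. Under the identification $L^2(\RR^2)\cong L^2(\RR)\otimes L^2(\RR)$ in the variables $x$ and $y$, the form $t_\Gamma$ splits as
\[
t_\Gamma(u,u)=\int_{\RR^2}|\partial_x u|^2\dd x\dd y+\Big(\int_{\RR^2}|\partial_y u|^2\dd x\dd y-\int_\RR|u(x,0+)-u(x,0-)|^2\dd x\Big),
\]
which identifies $T_\Gamma$ with the self-adjoint operator $P\otimes I+I\otimes B$, where $P:=-d^2/dx^2$ in $L^2(\RR)$ and $B$ is the one-dimensional $\delta'$-operator of Subsection \ref{delta'-section}. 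The spectral theorem for such tensor sums then yields $\spec T_\Gamma=\overline{\spec P+\spec B}$. Since $\spec P=[0,\infty)$ is purely absolutely continuous and $\spec B=\{-4\}\cup[0,\infty)$, the sum equals $[-4,\infty)$; moreover, because $P$ has no eigenvalues, $T_\Gamma$ has no eigenvalues at all, so in particular its discrete spectrum is empty.

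For the half-line I would reduce to the line by monotonicity. Extending the half-line $\Gamma$ to a full line $\Tilde\Gamma\supset\Gamma$, Lemma \ref{lem0} gives $T_\Gamma\ge T_{\Tilde\Gamma}$ using the identity map, whence $\Lambda_n(T_{\Tilde\Gamma})\le\Lambda_n(T_\Gamma)$ for every $n$ by the min-max comparison of Subsection \ref{sec-min-max}. By the line case $T_{\Tilde\Gamma}$ has no discrete spectrum, so $\Lambda_n(T_{\Tilde\Gamma})=\inf\specess T_{\Tilde\Gamma}=-4$ for all $n$, giving $\Lambda_n(T_\Gamma)\ge-4$. On the other hand the min-max principle always forces $\Lambda_n(T_\Gamma)\le\inf\specess T_\Gamma=-4$. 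Combining the two bounds yields $\Lambda_n(T_\Gamma)=-4$ for all $n$, i.e. $T_\Gamma$ has no eigenvalue below $-4$ and hence no discrete spectrum.

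The routine points are the verification that the form genuinely tensorizes and that $D(t_\Gamma)$ coincides with the natural tensor-product form domain. I expect the only genuinely delicate step to be making the tensor-sum spectral identity $\spec T_\Gamma=\overline{\spec P+\spec B}$ rigorous at the level of self-adjoint operators generated by forms, together with the remark that the purely absolutely continuous factor $P$ prevents any eigenvalues; once this is in place, both cases follow immediately.
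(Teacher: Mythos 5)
Your proof is correct, and your half-line argument (extend to a line, apply Lemma \ref{lem0} and the min-max comparison) is exactly the paper's; for the line, however, you take a genuinely different route. The paper stays entirely at the level of forms: since the one-dimensional form $b$ of Subsection \ref{delta'-section} satisfies $b(f,f)\ge -4\|f\|^2_{L^2(\RR)}$, applying this bound in the $y$-variable for a.e.\ $x$ and integrating (Fubini) gives $t_\Gamma(u,u)\ge -4\|u\|^2_{L^2(\RR^2)}$, i.e. $\inf\spec T_\Gamma\ge -4$, which together with $\specess T_\Gamma=[-4,\infty)$ already excludes discrete spectrum; this needs only the lower bound of $b$ and no identification of $T_\Gamma$ with a tensor sum. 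Your separation of variables $T_\Gamma\cong P\otimes I+I\otimes B$ is heavier: you must verify that $H^1(\RR)\otimes H^1(\RR\setminus\{0\})$ is dense in $H^1\big(\RR\times(\RR\setminus\{0\})\big)$ in the form norm (so the form domain really is the tensor form domain), and you must invoke both $\spec(P\otimes I+I\otimes B)=\overline{\spec P+\spec B}$ and a point-spectrum statement. On the latter, phrase it carefully: ``a purely absolutely continuous summand prevents eigenvalues'' is false for general commuting sums (take $S=-T$ with $T$ purely a.c.; then $S+T=0$ has an eigenvalue of infinite multiplicity). What is true, and what you actually need, is the tensor-sum identity $\sigma_p(P\otimes I+I\otimes B)=\sigma_p(P)+\sigma_p(B)$, proved by diagonalizing $P$ via the Fourier transform in $x$: an eigenvector with eigenvalue $\lambda$ would have to be, for a.e.\ frequency $\xi$, an eigenvector of $B$ with eigenvalue $\lambda-\xi^2$, which forces it to vanish a.e.\ since $B$ has the single eigenvalue $-4$. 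With that repaired, your route buys a strictly stronger conclusion than the lemma: the spectrum of the line operator is exactly $[-4,\infty)$ with no eigenvalues at all (in particular none embedded in the essential spectrum), whereas the paper's Fubini bound only controls the spectrum from below — at the price of tensor-sum machinery the paper deliberately avoids.
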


\begin{proof}
Due to $\specess T_\Gamma=[-4,\infty)$ it is sufficient to show $\inf\spec T_\Gamma\ge -4$.

	If $\Gamma$ is a line, without loss of generality one may assume that $\Gamma$ coincides with the $x$-axis, then the sesquilinear form $t_\Gamma$ for $T_\Gamma$ is
	\[
	t_\Gamma(u,u)=\int_{\RR\times (\RR\setminus \{0\})} |\nabla u|^2\dd x\dd y - \int_\RR \big|u(x,0^+)-u(x,0^-)\big|^2\dd x
	\]
	with $D(t_\Gamma)=H^1\big(\RR\times (\RR\setminus \{0\})\big)$. As discussed in Subsection \ref{delta'-section}, for any $f\in H^1(\RR\setminus \{0\})$ one has
\[
\int_{\mathbb{R}}\lvert f'(x)\rvert^{2}\dd x-\big\lvert f(0+)-f(0-)\big\rvert^{2}\ge -4 \|f\|^2_{L^2(\RR)},
\]
which implies by Fubini's theorem $t_\Gamma(u,u)\ge -4\|u\|^2_{L^2(\RR^2)}$ for all $u\in D(t_\Gamma)$,
i.e. $\inf\spec T_\Gamma\ge -4$.

If $\Gamma$ is a half-line, we extend it to a line $ \Tilde \Gamma$ and remark again that
$T_\Gamma\ge T_{\Tilde\Gamma}$ using the identity map (Lemma \ref{lem0}). By the first part of the proof
we have $\inf\spec T_{\Tilde\Gamma}\ge -4$, and the min-max principle implies
\[
\inf\spec T_{\Gamma}=\Lambda_1(T_\Gamma)\ge \Lambda_1(T_{\Tilde\Gamma})=\inf\spec T_{\Tilde\Gamma}\ge -4, \qedhere
\]
\end{proof}

Now we want to show that the discrete spectrum of $T_\Gamma$ is non-empty for all star graphs not covered by Lemma \ref{lem4}. We consider first the case of two branches, i.e. the operator $H_\theta$ defined in \eqref{eqhtt}.

\begin{lemma}\label{lem9}
For any $\theta\in(0,\frac{\pi}{2})$ the discrete spectrum of $H_\theta$ is non-empty.
\end{lemma}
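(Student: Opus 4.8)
The plan is to show that $H_\theta$ has at least one discrete eigenvalue below $\inf\specess H_\theta=-4$ by producing a single trial function $u\in D(h_\theta)=H^1(\RR^2\setminus\Gamma_\theta)$ with $h_\theta(u,u)<-4\|u\|^2_{L^2(\RR^2)}$. By the min-max principle (Subsection~\ref{sec-min-max}), this strict inequality $\Lambda_1(H_\theta)<-4=\Sigma$ forces $E_1(H_\theta)=\Lambda_1(H_\theta)$ to be a genuine discrete eigenvalue.

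The natural candidate is built from the one-dimensional $\delta'$-eigenfunction $\psi_1$ of Subsection~\ref{delta'-section}, which already saturates the threshold: for a line (the $\theta=\frac{\pi}{2}$ degenerate case, Lemma~\ref{lem4}) one has exactly $\inf\spec=-4$, so the eigenfunction $\psi_1(y)$ transported in the $y$-direction gives Rayleigh quotient exactly $-4$. The idea is that bending the line into a genuine wedge of half-angle $\theta<\frac{\pi}{2}$ should lower the energy. Concretely, I would take a localized profile adapted to $\Gamma_\theta$: writing $\rho(s)$ for the signed distance to $\Gamma_\theta$, set $u(x,y):=\chi(x,y)\,\psi_1(\rho(x,y))$ with a smooth cutoff $\chi$ supported near (but away from the vertex of) $\Gamma_\theta$, so that $[u]$ equals the jump of $\psi_1$ across the branches. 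In the straight part of each branch the normal-derivative and jump contributions reproduce the one-dimensional value $-4\|u\|^2$ up to the $L^2$-mass in that branch, while the curvature/corner at the vertex and the interaction between the two branches supply a negative correction.

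A cleaner route, matching the paper's comparison philosophy, is to compare $H_\theta$ directly with the Robin Laplacian in the sector. Recall from Lemma~\ref{lem2} (in the $M=2$ case) that $T_{\Gamma_\theta}\ge R^2_1\oplus R^2_2$ via $J$; this gives the wrong direction for existence. Instead I would seek a lower bound on $H_\theta$ by a Robin operator, i.e. find an isometry $J'$ realizing $Q^2_{\varphi}\le H_\theta$ for the appropriate opening half-angle, so that the non-emptiness of $\specd Q^2_\varphi$ for $\varphi<\frac{\pi}{2}$ (item (2) of Subsection~\ref{sec-robin}) transfers via relation (3) of Section~\ref{sec-min-max} to yield a discrete eigenvalue of $H_\theta$ below $-4$. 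The passage from the $\delta'$-jump term $-\int|[u]|^2$ to a boundary term $-\gamma\int|u|^2$ on a single side would be engineered by choosing $J'$ so that the test function is supported on one side of $\Gamma_\theta$ and extended by zero across it, turning the jump into the boundary trace and fixing the effective Robin constant $\gamma=2$ to align the essential-spectrum thresholds at $-4$.

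The main obstacle is verifying the strict sign of the correction term, i.e. genuinely controlling the cross-terms that arise when the cutoff and the geometry of the wedge are introduced. With the explicit trial-function approach, the difficulty is estimating the gradient of $\chi\psi_1(\rho)$ near the vertex, where $\rho$ is not smooth and the two branches meet; one must confirm that the favorable $O(1)$ negative contribution from the corner dominates the positive cutoff error, which can be arranged by scaling the support to be large (exploiting that $\psi_1$ decays exponentially away from $\Gamma_\theta$). With the Robin-comparison approach, the obstacle is instead constructing the isometry $J'$ so that the inequality $h_\theta(J'v,J'v)\le q^2_\varphi(v,v)$ holds for all $v\in H^1(U_\varphi)$; once this one-sided comparison is set up correctly, the result is immediate from the cited spectral facts. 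I would pursue the Robin comparison first, as it avoids delicate pointwise estimates near the vertex and reuses results already assembled in the Preliminaries.
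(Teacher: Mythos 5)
Your preferred (Robin-comparison) route has a genuine gap at the decisive point, namely the value of the coupling constant. If $v\in H^1(U_\theta)$ and $J'v$ denotes its extension by zero to $\RR^2$, then the jump of $J'v$ across $\Gamma_\theta=\partial U_\theta$ is exactly the boundary trace of $v$, so
\[
h_\theta(J'v,J'v)=\int_{U_\theta}|\nabla v|^2\dd x-\int_{\partial U_\theta}|v|^2\dd\sigma=q^{1}_{\theta}(v,v),
\]
i.e. this construction realizes $H_\theta\le Q^{1}_\theta$ using $J'$ with Robin constant $\gamma=1$, not $\gamma=2$. (Incidentally, your inequality ``$Q^2_\varphi\le H_\theta$'' is also written in the wrong direction: to transfer discrete spectrum \emph{to} $H_\theta$ via the comparison relations of Subsection~\ref{sec-min-max}, the operator $H_\theta$ must sit on the smaller side, which is what extension by zero in fact gives.) There is no isometric way to ``fix the effective Robin constant $\gamma=2$'': the jump of an extension by zero equals the trace, and multiplying by constants of modulus one does not change it. With $\gamma=1$ the thresholds do not align, $\inf\specess Q^1_\theta=-1$ versus $\inf\specess H_\theta=-4$, so non-emptiness of $\specd Q^1_\theta$ only yields $\Lambda_1(H_\theta)\le E_1(Q^1_\theta)<-1$, which says nothing about spectrum below $-4$. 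Quantitatively, $E_1(Q^1_\theta)=-1/\sin^{2}\theta$ (see \cite{lp,kp18}), so this comparison detects a discrete eigenvalue of $H_\theta$ only for $\theta<\frac{\pi}{6}$; this is exactly why the paper uses the extension-by-zero comparison only for the small-angle asymptotics (Lemma~\ref{lem-angle}) and not for the existence statement. Your first route (a trial function built from $\psi_1$ along $\Gamma_\theta$) is left incomplete at precisely the hard step --- showing that the corner produces a strictly negative correction dominating the cutoff errors --- which is the whole content of the lemma, so it cannot be counted as a proof either.

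The missing idea, and the paper's actual proof, is to compare with the $\delta$-interaction on $\Gamma_\theta$ rather than with a Robin Laplacian, using a \emph{sign flip on one side} instead of an extension by zero. For $u\in H^1(\RR^2)$ set $Ju:=(1_{\Omega_+}-1_{\Omega_-})u$. Then $Ju\in H^1(\RR^2\setminus\Gamma_\theta)$, the gradient term is unchanged, and since $u$ is continuous across $\Gamma_\theta$ the jump is $[Ju]=2u|_{\Gamma_\theta}$, whence
\[
h_\theta(Ju,Ju)=\int_{\RR^2}|\nabla u|^2\dd x-4\int_{\Gamma_\theta}|u|^2\dd\sigma=a^4_\theta(u,u),
\]
i.e. $H_\theta\le A^4_\theta$ using $J$. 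The sign flip is what produces the factor $2$ in the jump, hence the coupling $4$, and now the thresholds do align: $\specess A^4_\theta=[-\tfrac14\cdot 4^2,\infty)=[-4,\infty)=\specess H_\theta$. Since $\specd A^4_\theta\neq\emptyset$ for every $\theta\in(0,\frac{\pi}{2})$ by the Exner--Ichinose result quoted in Subsection~\ref{sec-delta} (see \cite{ei}), the min-max principle gives $\Lambda_1(H_\theta)\le E_1(A^4_\theta)<-4=\inf\specess H_\theta$, so $H_\theta$ has a discrete eigenvalue. In short: your construction is one sign change away from working, but that change replaces the comparison operator $Q^1_\theta$ by $A^4_\theta$, and only the latter has its essential spectrum threshold matched to that of $H_\theta$ for all angles.
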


\begin{proof}
Denote by $\Omega_\pm$ the two connected components of $\RR^2\setminus\Gamma_\theta$,
\begin{equation}
	\label{eqompp}
	\Omega_+:=\{(x,y): |\arg(x+iy)|<\theta\},\quad
	\Omega_-:=\{(x,y): |\arg(x+iy)|>\theta\},
\end{equation}
see Fig.~\ref{fig1}(b), and consider the unitary map
\[
J:L^2(\RR^2)\ni u\mapsto \Tilde u:=(1_{\Omega_+} -1_{\Omega_-})u\in L^2(\RR^{2}\setminus\Gamma_{\theta}).
\]
For $u\in H^1(\RR^2)$ one has $Ju\in H^1(\RR^2\setminus \Gamma_\theta)$, and
\begin{align*}
	\int_{\RR^2}|\nabla u|^2\dd x&=\int_{\RR^2\setminus\Gamma_\theta}|\nabla u|^2\dd x=\int_{\RR^2\setminus\Gamma_\theta}|\nabla \Tilde u|^2\dd x,\\
	\int_{\Gamma_\theta}\big|[\Tilde u]\big|^2\dd\sigma&= \int_{\Gamma_\theta}|2u|^2\dd\sigma=4\int_{\Gamma_\theta}|u|^2\dd\sigma,
\end{align*}
implying
\begin{align*}
h_\theta(Ju,Ju)&=\int_{\RR^2\setminus\Gamma_\theta}|\nabla \Tilde u|^2\dd x-\int_{\Gamma_\theta}\big|[\Tilde u]\big|^2\dd\sigma\\
&=\int_{\RR^2}|\nabla u|^2\dd x -4\int_{\Gamma_\theta}|u|^2\dd\sigma=a^4_\theta(u,u), \quad u\in D(a^4_{\theta});
\end{align*}
recall that the form $a^\gamma_\theta$ and the associated operator $A^\gamma_\theta$ were discussed in Subsection \ref{sec-delta}. The above shows that $H_\theta\le A^4_\theta$ using $J$, while the previous analysis
gives $\specess H_\theta=\specess A^4_\theta=[-4,\infty)$. As $A^4_\theta$ has at least one discrete eigenvalue
in $(-\infty,-4)$, the same holds for $H_\theta$ due to the min-max principle.
\end{proof}
We remark that the argument above is a particular case of a more general construction from \cite{bel14} on 
various relations between $\delta$ and $\delta'$-potentials on partitions of $\RR^n$, but
we preferred to reproduce it in order to have a self-contained presentation.

\begin{lemma}
	If the star graph $\Gamma$ is not a line or half-line, then the discrete spectrum of $T_\Gamma$ is non-empty.
\end{lemma}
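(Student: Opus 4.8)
The plan is to reduce the general case to the two-branch situation already settled in Lemma \ref{lem9} by deleting all but two suitably chosen branches and using the monotonicity of the form with respect to the branch set. Concretely, I would first isolate inside $\Gamma$ a subgraph $\Gamma'$ consisting of exactly two branches that do not form a straight line, and then exploit that removing branches can only raise the operator in the sense of forms, so that the ground-state energy of the full graph lies below that of $\Gamma'$, which Lemma \ref{lem9} places strictly below $-4=\inf\specess T_\Gamma$.

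First I would produce the two-branch subgraph. If $M=2$, then $\Gamma$ itself consists of two branches and, not being a line, these form an angle different from $\pi$; take $\Gamma'=\Gamma$. If $M\ge 3$, I claim that some pair of branches $\Gamma_i,\Gamma_j$ encloses an angle different from $\pi$: otherwise every two branches would be antipodal, but if $\Gamma_j$ and $\Gamma_k$ were both opposite to $\Gamma_i$ they would coincide, contradicting the distinctness of the angles $\theta_1<\dots<\theta_M$. Set $\Gamma':=\Gamma_i\cup\Gamma_j$. In either case $\Gamma'$ is a two-branch star graph that is not a line, and after a rotation (and possibly a reflection) aligning the bisector of its smaller sector with the positive $x$-axis, $\Gamma'$ coincides with $\Gamma_{\theta}$ for some $\theta\in(0,\frac{\pi}{2})$; the corresponding rigid motion induces a unitary equivalence $T_{\Gamma'}\simeq H_{\theta}$.

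Then I would conclude as follows. By Lemma \ref{lem9} the operator $H_\theta\simeq T_{\Gamma'}$ has non-empty discrete spectrum, and since $\specess T_{\Gamma'}=[-4,\infty)$ any such eigenvalue lies below $-4$; hence $\inf\spec T_{\Gamma'}=\Lambda_1(T_{\Gamma'})<-4$. As $\Gamma'\subset\Gamma$, Lemma \ref{lem0} gives $T_{\Gamma'}\ge T_\Gamma$ using the identity map, whence $\Lambda_1(T_\Gamma)\le\Lambda_1(T_{\Gamma'})<-4=\inf\specess T_\Gamma$. By the min-max principle of Subsection \ref{sec-min-max} this forces $\Lambda_1(T_\Gamma)$ to be a genuine discrete eigenvalue, so $\specd T_\Gamma\neq\emptyset$.

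The routine parts are the geometric normalization of $\Gamma'$ to $\Gamma_\theta$ and the bookkeeping of signs in the monotonicity. The one point demanding care — and the only real content beyond Lemma \ref{lem9} — is getting the direction of the inequality right: enlarging the (attractive) branch set should lower the energy, and indeed $\Gamma'\subset\Gamma$ yields $T_\Gamma\le T_{\Gamma'}$ rather than the reverse, so that the ground state of the full graph inherits the strict bound below $-4$ from its two-branch subgraph while the essential spectrum stays pinned at $[-4,\infty)$.
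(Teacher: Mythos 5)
Your proposal is correct and follows essentially the same route as the paper: select a two-branch subgraph $\Gamma'\simeq\Gamma_\theta$ with $\theta\in(0,\frac{\pi}{2})$, invoke Lemma \ref{lem9} for $H_\theta$, and use the monotonicity $T_\Gamma\le T_{\Gamma'}$ from Lemma \ref{lem0} together with the min-max principle and the common essential spectrum $[-4,\infty)$. The only difference is that you spell out the elementary geometric fact (via the antipodality argument) that such a non-collinear pair of branches exists, which the paper asserts without proof.
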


\begin{proof}
	By assumption, the star graph $\Gamma$ contains a rotated copy $\Gamma'$ of $\Gamma_\theta$ with some $\theta\in(0,\frac{\pi}{2})$.
	As $T_{\Gamma'}$ is unitarily equivalent to $H_\theta$, it has at least one discrete eigenvalue in $(-\infty,-4)$ by Lemma \ref{lem9}.
	Furthermore, we have $T_{\Gamma}\le T_{\Gamma'}$ using the identity map (Lemma \ref{lem0}). As both operators have the same essential spectrum $[-4,\infty)$,
	it follows by the min-max principle that $T_\Gamma$ also has at least one discrete eigenvalue in $(-\infty,-4)$.
\end{proof}
\begin{lemma}
	For any star graph $\Gamma$ the discrete spectrum of $T_\Gamma$ is finite.
\end{lemma}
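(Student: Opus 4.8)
The plan is to bound from above the number of eigenvalues of $T_\Gamma$ below the bottom of the essential spectrum by comparing $T_\Gamma$ with a direct sum of Robin Laplacians in sectors, for which finiteness of the discrete spectrum is already available. By the unitary equivalence \eqref{unit1} it again suffices to treat $\alpha=-1$, i.e.\ the operator $T_\Gamma$. The case $M=1$ is immediate: by Lemma \ref{lem4} a half-line carries no discrete spectrum. So from now on I assume $M\ge 2$.

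The starting point is Lemma \ref{lem2}, which gives $\bigoplus_{j=1}^M R^2_j\le T_\Gamma$ using the map $J$ of \eqref{eqjj}. Each $R^2_j$ is unitarily equivalent to the Robin Laplacian $Q^2_{\varphi_j}$ in the sector $U_{\varphi_j}$ (as already noted in the proof of Lemma \ref{lem3}), and by the results recalled in Subsection \ref{sec-robin} one has $\specess Q^2_{\varphi_j}=[-4,\infty)$ while $\specd Q^2_{\varphi_j}$ is at most finite. Consequently the spectrum of the finite direct sum $\bigoplus_{j=1}^M R^2_j$ in $(-\infty,-4)$ is purely discrete and consists of some finite number $N$ of eigenvalues, the multiplicities of the individual summands simply adding up.

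It then remains to transfer this bound to $T_\Gamma$. Applying the comparison principle of Subsection \ref{sec-min-max} (relation (2)) with $a=-4$, the smaller operator $\bigoplus_{j=1}^M R^2_j$ having exactly $N$ eigenvalues below $-4$ forces the larger operator $T_\Gamma$ to have purely discrete spectrum in $(-\infty,-4)$ consisting of \emph{at most} $N$ eigenvalues. Since $\specess T_\Gamma=[-4,\infty)$, every discrete eigenvalue of $T_\Gamma$ lies in $(-\infty,-4)$, so $\specd T_\Gamma$ is finite, with cardinality bounded by $N$.

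The step requiring the most care is getting the direction of the comparison right: because $\Lambda_n$ is monotone under the relation $\le$, it is the \emph{smaller} operator $\bigoplus_j R^2_j$ that has at least as many eigenvalues below a given level, so bounding its finite count yields an upper bound for $T_\Gamma$ rather than the reverse. The only external ingredient is the finiteness of $\specd Q^2_{\varphi_j}$ from \cite{kp18,lp}; everything else is bookkeeping with the min-max principle.
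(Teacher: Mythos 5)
Your proof is correct and follows essentially the same route as the paper: the case $M=1$ is dispatched by Lemma \ref{lem4}, and for $M\ge 2$ one combines the comparison $T_\Gamma\ge \bigoplus_{j=1}^M R^2_j$ from Lemma \ref{lem2} with the unitary equivalence $R^2_j\simeq Q^2_{\varphi_j}$, the finiteness of $\specd Q^2_{\varphi_j}$ from Subsection \ref{sec-robin}, and the min-max transfer of the finite eigenvalue count below $-4$. Your extra care about the direction of the comparison (the smaller operator dominates the eigenvalue count below a given level) is exactly the point implicit in the paper's appeal to the min-max principle.
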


\begin{proof}
The statement is true for $M=1$ by Lemma \ref{lem4}, so we only need to consider $M\ge 2$.
We use again the relation $T_\Gamma\ge \bigoplus_{j=1}^M R^2_j$ using $J$ as discussed in Lemma \ref{lem2}.
Introducing again the angles $\varphi_j$ by \eqref{phij} we conclude that
each $R^2_j$ is unitarily equivalent to $Q^2_{\varphi_j}$ (Subsection \ref{sec-robin}) and has
$n_j<\infty$ discrete eigenvalues in $(-\infty,-4)$. Then $\bigoplus_{j=1}^M R^2_j$ has $n_1+\dots+n_M<\infty$
eigenvalues in $(-\infty,-4)$, and by the min-max principle the same holds for $T_\Gamma$.
\end{proof}

At this point all statements of Theorem \ref{thm1} (general star graphs) are completely proved.

\section{Eigenvalues of star graphs with two branches}\label{secangle}

Now we look more attentively at the case of two branches, i.e. the operator $H_{\theta,\alpha}$ from \eqref{eqhtt}.
In order to complete the proof of Theorem \ref{thm2} it remains to prove the statements 
concerning the dependence of the eigenvalues on $\theta$. As in the above cases, it is sufficient to prove the statements for $\alpha=-1$, i.e. for the operator $H_\theta$ only. Most steps are inspired by respective constructions in \cite{en,kp18}.

\subsection{Continuity and monotonicity}

We first decompose the operator $H_\theta$ using the parity with respect to $y$. Namely,
consider the maps
\begin{gather*}
\Pi_\pm:L^{2}(\mathbb{R}^{2})\to  L^{2}(\mathbb{R}^{2}_{+}),\quad
\Pi_\pm u(x,y):=\frac{u(x,y)\pm u(x,-y)}{\sqrt{2}},
\end{gather*}
then one easily observes that
\[
\Phi: L^{2}(\mathbb{R}^{2})\ni u\mapsto(\Pi_+ u, \Pi_- u)\in L^{2}(\mathbb{R}^{2}_{+})\oplus L^{2}(\mathbb{R}^{2}_{+})
\]
is a unitary operator. A short direct computation shows that 
\[
h_{\theta}(u,u)=h_{\theta}^{N}(\Pi_+ u,\Pi_+u)+h_{\theta}^{D}(\Pi_- u, \Pi_- u) \text{ for any $u\in D(h_\theta)$,}
\]
where both $h^{N/D}_{\theta}$  are closed, lower semibounded, symmetric sesquilinear forms in $L^2(\RR^2_+)$ given
by the same expression,
\begin{equation*}
h^{N/D}_{\theta}(v,v)=\int_{\mathbb{R}^{2}_{+}\setminus \Gamma^+_\theta}|\nabla v|^2\dd x-\int_{\Gamma^+_\theta}|[v]|^2\dd\sigma,
\quad
\Gamma^{+}_{\theta}:=\Gamma_\theta\cap \RR^2_+,
\end{equation*}
but with distinct domains,
\begin{align*}
	D(h^{N}_{\theta})&=\Pi_{+} D(h_{\theta})=H^{1}(\mathbb{R}^{2}_{+}\setminus\Gamma_{\theta}^{+}),\\ D(h^{D}_{\theta})&=\Pi_{-} D(h_{\theta})=\{v\in H^{1}(\mathbb{R}^{2}_{+}\setminus\Gamma_{\theta}^{+}):v(\cdot,0)=0\},
\end{align*}
where $v(\cdot,0)$ is understood in the sense of Sobolev traces,
and we denote by $H^{N/D}_\theta$ the associated self-adjoint operators in $L^2(\RR^2_+)$.
Consider the the closed, lower semibounded, symmetric sesquilinear form $\widetilde{h}_\theta$ in $L^2(\RR^2_+)\times L^2(\RR^2_+)$,
\[
\widetilde{h}_\theta\big((v,u),(v,u)\big):=h_{\theta}^{N}(v,v)+h_{\theta}^{D}(u,u), \quad D(\widetilde{h}_\theta):=D(h_{\theta}^{N})\times D(h_{\theta}^{D}),
\]
then the associated operator is $\widetilde{H}_\theta = H^N_\theta\oplus H^D_\theta$. On the other hand, the above constructions
show that $D(\widetilde{h}_\theta)=\Phi D(h_\theta)$ and that $h_\theta(u,u)=\widetilde{h}_\theta(\Phi u,\Phi u)$ for any $u\in D(h_\theta)$,
which means that $H_\theta =\Phi^{-1}\widetilde{H}_\theta \Phi$. 
We conclude that $H_\theta=\Phi^{-1} (H^N_\theta\oplus H^D_\theta)\Phi$ (see Ch.~6 §1 in Kato's book \cite{kato}
for a detailed discussion of unitary equivalences with the help of forms).

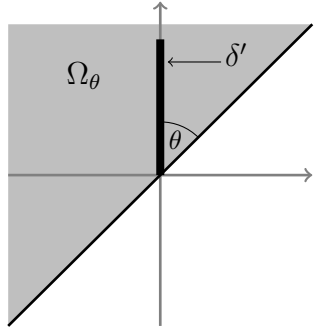
\begin{figure}[H]
	\centering
	
	\begin{tikzpicture}
		\coordinate (a) at (0.2,0.2);
		\coordinate (b) at (-2,2);
		\coordinate (c) at (1,1.3);
		\coordinate (d) at (-1,1);
		\coordinate (e) at (1,1);
		\coordinate (f) at (0,1);
		\fill[lightgray] (-2,-2)--(2,2)--(-2,2);
		\draw [->,color=gray, line width=1pt] (-2,0) -- (2,0);
		\draw [->,color=gray, line width=1pt] (0,-2) -- (0,2.3);
		\draw [domain=-2:2, line width=1pt] plot (\x,\x);    
		\draw [color=black, line width=3pt] (0,0) -- (0,1.8);
		
		\draw (0.5,0.5) arc (45:90:0.7);
		\draw [->] (0.8,1.5) -- (0.1,1.5);
		\draw (c) node[above] {$\delta'$};
		\draw (a) node[above] {\footnotesize $\theta$};
		\draw (d) node[above] {$\Omega_{\theta}$};
		
	\end{tikzpicture}
	\caption{Visualization of  $\Omega_{\theta}$.}\label{fig4}
	
\end{figure}

It will be useful to apply an additional rotation by $\frac{\pi}{2}-\theta$, which bijectively maps $\Gamma_\theta^+$ on the positive $y$-half-axis $Oy^+$ and $\RR^2_+$ on the set (see Fig.~\ref{fig4})
\[
\Omega_{\theta}=\{(x,y)\in\mathbb{R}^{2}:x<y\tan \theta\}.
\]
The corresponding coordinate change transforms $H^{N/D}_\theta$ to unitarily equivalent self-adjoint operators $\Tilde H^{N/D}_\theta$ in $L^2(\Omega_\theta)$ whose sesquilinear forms are
\begin{align*}
	\Tilde h^{N/D}_{\theta}(v,v)&=\int_{\Omega_\theta \setminus Oy^+}|\nabla v|^2\dd x \dd y-\int_0^\infty|v(0^+,y)-v(0^-,y)|^2\dd y,\\
	D(\Tilde h^{N}_{\theta})&=H^1(\Omega_\theta \setminus Oy^+),\quad
	D(\Tilde h^{D}_{\theta})=\{v\in H^1(\Omega_\theta \setminus Oy^+):\, v=0 \text{ on } \partial\Omega_\theta\}.
\end{align*}

We summarize the above constructions as follows:
\begin{lemma}
The operator $H_\theta$ is unitarily equivalent to $\Tilde H^{N}_\theta \oplus \Tilde H^{D}_\theta$.
\end{lemma}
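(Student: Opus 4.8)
The plan is to obtain the claimed unitary equivalence as a composition of two elementary unitary equivalences that are set up in the preceding discussion: first the parity decomposition with respect to $y$, and then the rigid rotation by $\frac{\pi}{2}-\theta$.

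For the parity step, the key structural fact is that $\Gamma_\theta$ is invariant under the reflection $(x,y)\mapsto(x,-y)$, since its two branches $y=\pm x\tan\theta$ are interchanged by this map. Consequently the reflection commutes with the form $h_\theta$, and the even/odd splitting furnished by $\Phi=(\Pi_+,\Pi_-)$ block-diagonalizes the operator. I would first check that $\Phi$ is unitary (a direct computation, since $\Pi_\pm$ are the orthogonal projections onto the even and odd subspaces, folded onto $\RR^2_+$), and that $\Phi\,D(h_\theta)=D(h^N_\theta)\times D(h^D_\theta)$, with the Dirichlet trace condition $v(\cdot,0)=0$ appearing exactly for the odd part. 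The heart of the matter is the quadratic-form identity $h_\theta(u,u)=h^N_\theta(\Pi_+u,\Pi_+u)+h^D_\theta(\Pi_-u,\Pi_-u)$: the Dirichlet-energy term splits by Fubini and the parity folding of $\RR^2=\RR^2_+\cup\RR^2_-$ onto $\RR^2_+$, while the jump term on $\Gamma_\theta$ must be rewritten, using the reflection symmetry of the two branches, as a single jump integral of $\Pi_\pm u$ over $\Gamma^+_\theta$. Once this identity is in place, the abstract correspondence between forms and operators (as in \cite{kato}) yields $H_\theta=\Phi^{-1}(H^N_\theta\oplus H^D_\theta)\Phi$.

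For the rotation step, I would use that rotation of $\RR^2$ by the angle $\frac{\pi}{2}-\theta$ is a rigid motion, hence induces a unitary operator from $L^2(\RR^2_+)$ onto $L^2(\Omega_\theta)$ that maps $\Gamma^+_\theta$ onto the positive $y$-half-axis $Oy^+$. Since both the Dirichlet energy and the arclength measure are rotation-invariant, this unitary carries $h^{N/D}_\theta$ to $\Tilde h^{N/D}_\theta$ and therefore intertwines $H^{N/D}_\theta$ with $\Tilde H^{N/D}_\theta$, preserving the respective Neumann/Dirichlet conditions on $\partial\Omega_\theta$. Composing the two equivalences gives $H_\theta\simeq H^N_\theta\oplus H^D_\theta\simeq \Tilde H^N_\theta\oplus\Tilde H^D_\theta$, as claimed.

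I expect the only genuine obstacle to be the bookkeeping in the jump term of the form identity: one must verify that the jump of $u$ across the two symmetric branches of $\Gamma_\theta$ reassembles, after the parity folding, into exactly the single-branch jump expressions defining $h^N_\theta$ and $h^D_\theta$, with the Dirichlet constraint landing on the odd sector and no constraint on the even sector. Everything else—the unitarity of $\Phi$ and the rotation invariance—is routine.
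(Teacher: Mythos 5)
Your proposal is correct and follows essentially the same route as the paper: the parity decomposition $\Phi=(\Pi_+,\Pi_-)$ with the form identity $h_\theta(u,u)=h^N_\theta(\Pi_+u,\Pi_+u)+h^D_\theta(\Pi_-u,\Pi_-u)$ (the two-branch jump term recombining, via the reflection symmetry, into single-branch jumps of the even and odd parts, with the Dirichlet trace condition on the odd sector), followed by the rotation by $\frac{\pi}{2}-\theta$ carrying $H^{N/D}_\theta$ to $\Tilde H^{N/D}_\theta$. The paper likewise treats the form identity as a short direct computation and concludes via Kato's form--operator correspondence, so your identification of the jump-term bookkeeping as the only substantive verification matches its proof.
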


Recall that we are interested in the discrete spectrum of $H_\theta$, which is located in $(-\infty,-4)$.

\begin{lemma}
	There holds $\spec \Tilde H^D_\theta\subset [-4,\infty)$.
\end{lemma}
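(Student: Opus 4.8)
The plan is to establish the quadratic-form lower bound $\Tilde h^D_\theta(v,v)\ge -4\,\|v\|^2_{L^2(\Omega_\theta)}$ for every $v\in D(\Tilde h^D_\theta)$, which is equivalent to $\inf\spec\Tilde H^D_\theta\ge -4$ and hence to the assertion. The mechanism is a horizontal slicing (Fubini) argument that reduces the two-dimensional estimate to the one-dimensional $\delta'$-inequality recalled in Subsection \ref{delta'-section}.

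First I would drop the nonnegative term $\int|\partial_y v|^2$ and retain only
\[
\Tilde h^D_\theta(v,v)\ge\int_{\Omega_\theta\setminus Oy^+}|\partial_x v|^2\dd x\dd y-\int_0^\infty\big|v(0^+,y)-v(0^-,y)\big|^2\dd y.
\]
For almost every $y$ the slice function $f:=v(\cdot,y)$ lies in $H^1$ of the interval $J_y:=\{x:(x,y)\in\Omega_\theta\}=(-\infty,y\tan\theta)$ (away from the singularity), so by Fubini it suffices to bound each slice contribution below by $-4\int_{J_y}|f|^2\dd x$. The decisive geometric point is that the singular point $x=0$ lies in the interior of $J_y$ exactly when $y\tan\theta>0$, i.e.\ precisely for $y>0$, which is exactly the range of the boundary integral; for $y\le 0$ the slice carries no $\delta'$ and its contribution is $\int_{J_y}|f'|^2\ge 0\ge -4\int_{J_y}|f|^2$.

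For $y>0$ I would exploit the Dirichlet condition: since $v=0$ on $\partial\Omega_\theta$, the trace of $f$ at the right endpoint vanishes, $f(y\tan\theta)=0$. Extending $f$ by zero to $[y\tan\theta,\infty)$ produces a function $\bar f\in H^1(\RR\setminus\{0\})$ whose only jump is at $x=0$ and which satisfies $\int_\RR|\bar f'|^2=\int_{J_y}|f'|^2$, $\|\bar f\|_{L^2(\RR)}=\|f\|_{L^2(J_y)}$ and $\bar f(0\pm)=f(0\pm)$. The one-dimensional estimate from Subsection \ref{delta'-section} applied to $\bar f$ then gives the required slice bound, and integrating over $y$ recombines the slices into $\Tilde h^D_\theta(v,v)\ge -4\|v\|^2_{L^2(\Omega_\theta)}$.

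The only points needing care are the standard measure-theoretic ingredients of the slicing: that for $v\in H^1(\Omega_\theta\setminus Oy^+)$ almost every horizontal slice belongs to $H^1$, that the one-dimensional boundary and jump traces coincide with the a.e.\ slice values, and that extension by zero across the Dirichlet endpoint indeed yields an $H^1$ function. These are routine; the genuine content of the argument is the observation that the support $\{y>0\}$ of the jump term matches exactly the slices in which the $\delta'$ singularity is interior to $\Omega_\theta$, so that the sharp constant $-4$ survives the reduction.
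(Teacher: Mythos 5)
Your proposal is correct and follows essentially the same route as the paper's proof: a Fubini slicing reduction to the one-dimensional $\delta'$-inequality of Subsection \ref{delta'-section}, with the Dirichlet condition on $\partial\Omega_\theta$ used to extend by zero so that the sharp constant $-4$ is preserved. The only cosmetic difference is that the paper performs the zero extension once in two dimensions (obtaining $\Tilde v\in H^1(\RR^2\setminus Oy^+)$) before slicing, whereas you extend each horizontal slice separately; the underlying argument is identical.
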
	

\begin{proof} For $v\in D(\Tilde h^D_\theta)$, let $\Tilde v$ denote its extension by zero to the whole of $\RR^2$, then $\Tilde v\in H^{1}(\RR^2\setminus Oy^{+})$, and
\begin{equation}
	\label{loc11}
\Tilde h^{D}_{\theta}(v,v)=\int_{\RR^2\setminus Oy^{+}} |\nabla \Tilde v|^2\dd x\dd y-\int_\RR|\Tilde v(0^+,y)-\Tilde v(0^-,y)|^2\dd y.
\end{equation}
By Fubini's theorem, for almost every $y\in\RR$ the function $x\mapsto \Tilde v(x,y)$ is in $H^1(\RR\setminus\{0\})$, hence, as discussed in Subsection \ref{sec-delta},
\[
\int_\RR |\partial_x \Tilde v(x,y)|^2\dd x-|\Tilde v(0^+,y)-\Tilde v(0^-,y)|^2\ge -4 \int_\RR |\Tilde v(x,y)|^2\dd x.
\]
The substitution into \eqref{loc11} shows that for any $v\in D(\Tilde h^D_\theta)$ one has
\[
\Tilde h^{D}_{\theta}(v,v)\ge -4 \|\Tilde v\|^2_{L^2(\RR^2)}\equiv -4 \|v\|^2_{L^2(\Omega_\theta)},
\]
and the min-max principle implies $\inf\spec \Tilde H^D_\theta\ge -4$.
\end{proof}

\begin{corollary}
The spectrum of $H_\theta$ in $(-\infty,-4)$ coincides with the spectrum of $\Tilde H^N_\theta$ in $(-\infty,-4)$.
\end{corollary}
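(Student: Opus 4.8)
The plan is to combine the two immediately preceding results, so the argument is essentially bookkeeping. First I would invoke the unitary equivalence $H_\theta\simeq \Tilde H^N_\theta \oplus \Tilde H^D_\theta$ established in the lemma above. Since the spectrum of a self-adjoint operator is invariant under unitary equivalence, and the spectrum of an orthogonal direct sum of two self-adjoint operators is the union of the spectra of the summands, this yields the set identity $\spec H_\theta=\spec \Tilde H^N_\theta\cup\spec \Tilde H^D_\theta$.

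Next I would bring in the bound $\spec \Tilde H^D_\theta\subset[-4,\infty)$ proved in the previous lemma. Intersecting the above identity with the open interval $(-\infty,-4)$ and noting that $\spec \Tilde H^D_\theta\cap(-\infty,-4)=\emptyset$, the contribution of the Dirichlet part drops out entirely, leaving
\[
\spec H_\theta\cap(-\infty,-4)=\spec \Tilde H^N_\theta\cap(-\infty,-4),
\]
which is exactly the assertion of the corollary.

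There is no genuine obstacle here, as the statement is a direct consequence of the parity decomposition with respect to $y$ carried out above. The only point deserving a word of care is the direct-sum spectrum identity: in general $\spec(A\oplus B)=\overline{\spec A\cup\spec B}$, but since $\spec \Tilde H^N_\theta$ and $\spec \Tilde H^D_\theta$ are each closed, their union is already closed and no closure operation is needed. This confirms the decomposition at the level of sets and completes the argument.
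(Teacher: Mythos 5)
Your proposal is correct and is exactly the argument the paper intends: the corollary is an immediate consequence of the unitary equivalence $H_\theta\simeq \Tilde H^N_\theta\oplus\Tilde H^D_\theta$ and the bound $\spec\Tilde H^D_\theta\subset[-4,\infty)$, combined via $\spec(A\oplus B)=\spec A\cup\spec B$ and intersection with $(-\infty,-4)$. Your side remark about closures is also fine (and in fact for a direct sum of \emph{two} self-adjoint operators the identity $\rho(A\oplus B)=\rho(A)\cap\rho(B)$ holds exactly, so no closure ever enters).
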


Now the behavior of the discrete eigenvalues of $H_\theta$ is reduced to that
for $\Tilde H^N_\theta$.
In order to analyze $\Tilde H^N_\theta$ we consider another unitary map,
\[
U:L^{2}(\Omega_{\frac{\pi}{4}})\rightarrow L^{2}(\Omega_{\theta}),
\quad
Uv(x,y)=\sqrt{\tan \theta}v(x,y\tan \theta),
\]
which maps $D(\Tilde h^N_\frac{\pi}{4})$ bijectively onto $D(\Tilde h^N_\theta)$, then a short computation shows that for any $v\in D(\Tilde h^N_\frac{\pi}{4})$ there holds
\begin{equation*}
\begin{aligned}
\Tilde{h}^N_{\theta}(Uv,Uv)=&\int_{\Omega_{\frac{\pi}{4}}\setminus Oy^+}\big(|\partial_x v|^2+(\tan \theta)^2 |\partial_y v|^2\big)\dd x\dd y
\\
&-\int_{0}^{\infty}\lvert v(0^+,y)-v(0^-,y)\rvert^{2}\dd y=:s_{\theta}(v,v).
\end{aligned}
\end{equation*}
By construction, the expression $s_\theta$, with $D(s_\theta)=D(\Tilde h^N_\frac{\pi}{4})$, defines a closed, lower semibounded, symmetric sesquilinear form in $L^2(\Omega_\frac{\pi}{4})$, and the associated operator $S_\theta$ is unitarily equivalent to $\Tilde H^N_\theta$, which proves
\begin{corollary}\label{corol15}
	The spectrum of $H_\theta$ in $(-\infty,-4)$ coincides with the spectrum of $S_\theta$ in $(-\infty,-4)$.
\end{corollary}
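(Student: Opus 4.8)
The plan is to deduce the statement by chaining together two facts, one already in place and one that the construction above renders essentially immediate. The preceding (unlabeled) corollary already identifies the spectrum of $H_\theta$ in $(-\infty,-4)$ with that of $\Tilde H^N_\theta$ in the same interval, so it suffices to show that $S_\theta$ and $\Tilde H^N_\theta$ are unitarily equivalent; their spectra then coincide everywhere, and in particular on $(-\infty,-4)$.

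First I would check that $U$ is genuinely unitary and respects the form domains. The map acts as the $y$-dilation $(x,y)\mapsto(x,y\tan\theta)$ dressed with the factor $\sqrt{\tan\theta}$, and since this dilation sends $\Omega_{\frac{\pi}{4}}=\{x<y\}$ onto $\Omega_\theta=\{x<y\tan\theta\}$ and fixes $Oy^+$ as a set, the substitution $y'=y\tan\theta$ shows at once that $\|Uv\|_{L^2(\Omega_\theta)}=\|v\|_{L^2(\Omega_{\frac{\pi}{4}})}$, the Jacobian $\tan\theta$ cancelling the normalization factor, and that $U$ carries $D(\Tilde h^N_{\frac{\pi}{4}})=H^1(\Omega_{\frac{\pi}{4}}\setminus Oy^+)$ bijectively onto $D(\Tilde h^N_\theta)=H^1(\Omega_\theta\setminus Oy^+)$.

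Next I would confirm the form identity $\Tilde h^N_\theta(Uv,Uv)=s_\theta(v,v)$ by the same change of variables. Differentiating $Uv$ shows that the $x$-derivative term is unchanged after substitution, while the $y$-derivative term acquires a factor $(\tan\theta)^2$ from the chain rule, reproducing the anisotropic gradient part of $s_\theta$; and on the jump integral the factor $\tan\theta$ coming from $|Uv|^2$ cancels the $1/\tan\theta$ Jacobian of the one-dimensional substitution on the half-axis, so that term is preserved verbatim. This identity, together with $D(s_\theta)=D(\Tilde h^N_{\frac{\pi}{4}})$, exhibits $s_\theta$ as the pullback of the closed form $\Tilde h^N_\theta$ under the unitary $U$; by the standard form-transfer principle (Kato, Ch.~6, \S1) the form $s_\theta$ is then automatically closed, lower semibounded and symmetric, and the operator it generates satisfies $S_\theta=U^{-1}\Tilde H^N_\theta U$.

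Finally, unitarily equivalent self-adjoint operators have identical spectra, so $\spec S_\theta=\spec\Tilde H^N_\theta$; restricting to $(-\infty,-4)$ and invoking the preceding corollary gives the claim. I do not expect a genuine obstacle here, since the corollary merely repackages the construction already carried out; the only points demanding care are the Jacobian bookkeeping that makes $U$ an isometry rather than a mere bounded bijection, and the observation that the $y$-dilation leaves the half-axis $Oy^+$—and hence the one-dimensional jump term—invariant.
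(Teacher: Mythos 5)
Your proposal is correct and takes essentially the same route as the paper: the paper's proof consists precisely of checking that $U$ is unitary, that $\Tilde h^N_\theta(Uv,Uv)=s_\theta(v,v)$ on $D(s_\theta)=D(\Tilde h^N_{\frac{\pi}{4}})$ (your Jacobian and chain-rule bookkeeping is the paper's ``short computation''), hence $S_\theta=U^{-1}\Tilde H^N_\theta U$, and then combining this unitary equivalence with the preceding corollary identifying the spectrum of $H_\theta$ in $(-\infty,-4)$ with that of $\Tilde H^N_\theta$. No gaps; the minor ambiguity about which direction the underlying $y$-dilation maps the sectors is immaterial since your isometry and form computations are carried out correctly.
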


\begin{lemma}\label{lem16}
For all fixed $n\in\mathbb{N}$ the map $(0,\frac{\pi}{2})\ni\theta\mapsto \Lambda_{n}(S_{\theta})$ is non-decreasing and continuous.
\end{lemma}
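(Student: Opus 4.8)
The plan is to use that all forms $s_\theta$ share the \emph{same} domain $D(s_\theta)=D(\Tilde h^N_{\frac{\pi}{4}})=:\cD$ and depend on $\theta$ only through the scalar $(\tan\theta)^2$ multiplying the $\partial_y$-term. Setting
\[
\beta(v):=\int_{\Omega_{\frac{\pi}{4}}\setminus Oy^+}|\partial_y v|^2\dd x\dd y\ge 0,
\]
one has, for all $v\in\cD$ and all $\theta,\theta'\in(0,\tfrac{\pi}{2})$, the affine identity
\begin{equation}
\label{eqaffine}
s_\theta(v,v)=s_{\theta'}(v,v)+\big((\tan\theta)^2-(\tan\theta')^2\big)\,\beta(v).
\end{equation}
Monotonicity is then immediate: since $\beta\ge 0$ and $(0,\tfrac{\pi}{2})\ni\theta\mapsto(\tan\theta)^2$ is increasing, for $\theta_1\le\theta_2$ the identity \eqref{eqaffine} gives $s_{\theta_1}(v,v)\le s_{\theta_2}(v,v)$ for every $v\in\cD$; as the domains coincide this means $S_{\theta_1}\le S_{\theta_2}$ using the identity map, and the min-max principle of Subsection~\ref{sec-min-max} yields $\Lambda_n(S_{\theta_1})\le\Lambda_n(S_{\theta_2})$.

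For continuity it suffices to argue at a fixed $\theta_0\in(0,\tfrac{\pi}{2})$. The main obstacle is that $\beta$ is an \emph{unbounded} form, so $s_\theta$ and $s_{\theta_0}$ cannot be compared merely up to a multiple of the $L^2$-norm; I would overcome this by a relative form bound obtained from monotonicity together with the lower semiboundedness already available for the forms $s_\theta$. Pick any $a\in(0,\theta_0)$ and put $K:=-\inf\spec S_a<\infty$, so that $s_a(v,v)\ge -K\|v\|^2$ for all $v\in\cD$. With $m:=(\tan\theta_0)^2-(\tan a)^2>0$, the identity \eqref{eqaffine} with $\theta'=a$ gives, for all $v\in\cD$,
\[
m\,\beta(v)=s_{\theta_0}(v,v)-s_a(v,v)\le s_{\theta_0}(v,v)+K\|v\|^2.
\]
In particular $s_{\theta_0}(v,v)\ge s_a(v,v)\ge -K\|v\|^2$, so the shifted form $\hat s_{\theta_0}:=s_{\theta_0}+K\|\cdot\|^2$ is nonnegative on $\cD$ and satisfies $\beta(v)\le \tfrac1m\,\hat s_{\theta_0}(v,v)$.

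Combining this with \eqref{eqaffine} (for $\theta'=\theta_0$) yields, for all $\theta\in(a,\tfrac{\pi}{2})$ and all $v\in\cD$,
\[
\big|s_\theta(v,v)-s_{\theta_0}(v,v)\big|=\big|(\tan\theta)^2-(\tan\theta_0)^2\big|\,\beta(v)\le\eta(\theta)\,\hat s_{\theta_0}(v,v),
\]
where $\eta(\theta):=\big|(\tan\theta)^2-(\tan\theta_0)^2\big|/m\to 0$ as $\theta\to\theta_0$. Hence the shifted nonnegative forms $\hat s_\theta:=s_\theta+K\|\cdot\|^2$ obey the two-sided form inequality $(1-\eta)\hat s_{\theta_0}\le\hat s_\theta\le(1+\eta)\hat s_{\theta_0}$ on $\cD$. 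Since $\hat s_{\theta_0}\ge 0$ and $\Lambda_n(\hat s_\theta)=\Lambda_n(S_\theta)+K$, the scaling and comparison properties of the min-max values give
\[
(1-\eta)\big(\Lambda_n(S_{\theta_0})+K\big)\le\Lambda_n(S_\theta)+K\le(1+\eta)\big(\Lambda_n(S_{\theta_0})+K\big).
\]
Letting $\theta\to\theta_0$ forces $\eta\to 0$ and hence $\Lambda_n(S_\theta)\to\Lambda_n(S_{\theta_0})$, which is the desired continuity; as $\theta_0$ is arbitrary, the map is continuous on $(0,\tfrac{\pi}{2})$. The one genuine input beyond bookkeeping is the relative bound $\beta(v)\le\tfrac1m\hat s_{\theta_0}(v,v)$, and I expect the reduction of the unbounded perturbation $\beta$ to the reference form via monotonicity to be the crux of the argument.
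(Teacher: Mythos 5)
Your proof is correct, and its skeleton matches the paper's: monotonicity from the pointwise form inequality on the common domain, and continuity from a relative bound on the perturbation $\beta$ followed by a two-sided min-max comparison. The genuine difference is the source of that relative bound. The paper invokes closedness of $s_\theta$, i.e.\ that the shifted form norm $\|v\|_s^2=s_\theta(v,v)+\bigl(1-\Lambda_1(S_\theta)\bigr)\|v\|^2$ is equivalent to the $H^1$-norm on the form domain, which gives $\beta(v)\le c\,\|v\|_s^2$ with an unspecified constant $c$. You instead exploit the affine dependence on $\tan^2\theta$ together with monotonicity itself: fixing $a<\theta_0$ and using only the lower semiboundedness of $S_a$, you obtain $\beta\le\tfrac{1}{m}\bigl(s_{\theta_0}+K\|\cdot\|^2\bigr)$ with the explicit constants $m=\tan^2\theta_0-\tan^2 a$ and $K=-\inf\spec S_a$. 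This is a genuinely different route to the same estimate: it is more self-contained (no appeal to the $H^1$-equivalence of the form norm, only to semiboundedness, which is part of the standing assumptions on the forms), it produces explicit constants, and it is essentially the standard argument for families whose coupling parameter enters affinely with a sign-definite perturbation (holomorphic families of type (B)). The paper's version is marginally shorter because the norm equivalence is already available from the closedness assertion; beyond that, the two proofs conclude identically, and your bookkeeping with the shifted nonnegative forms $\hat s_\theta$ and the scaling property $\Lambda_n(c\,T)=c\,\Lambda_n(T)$ for $c>0$ is sound.
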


\begin{proof}
The analysis of $S_\theta$ is simpler as the angle $\theta$ only appears in its coefficients and not in the domain.

Remark that the domains of all sesquilinear forms $s_\theta$ are the same.
For $\theta_1\le \theta_2$	we have $\tan\theta_1\le \tan\theta_2$, which implies
that $S_{\theta_1}\le S_{\theta_2}$	using the identity map (see Subsection \ref{sec-min-max}).
Hence, $\Lambda_n(S_{\theta_1})\le \Lambda_n(S_{\theta_2})$, which shows the monotonicity.
	
For the continuity, let $\theta\in(0,\frac{\pi}{2})$, then
\[
\|v\|^2_s:=s_\theta(v,v)+\big(1-\Lambda_1(S_\theta)\big)\|v\|^2_{L^2(\Omega_{\frac{\pi}{4}})}
\]
defines an equivalent norm on $H^1(\Omega_{\frac{\pi}{4}}\setminus Oy^+)$, and there is $c>0$ with 
\[
\int_{\Omega_{\frac{\pi}{4}}\setminus Oy^+} |\partial_y v|^2\dd x\dd y\le \|v\|_{H^{1}(\Omega_{\frac{\pi}{4}}\setminus Oy^+)}^{2}\le c \|v\|^2_s \text{ for all } v\in D(s_\theta).
\]
Let $n\in\NN$ be fixed. Pick $\varepsilon>0$ and choose $\delta>0$ with
$|\tan^2 \theta'-\tan^2\theta|<\varepsilon$ for all $\theta'$ with $|\theta'-\theta|<\delta$,
then for any such $\theta'$ and any $v\in D(s_\theta)\equiv D(s_{\theta'})$ there holds
\begin{align*}
\big|s_{\theta'}(v,v)-s_{\theta}(v,v)\big|&=
\big|\tan^2 \theta'-\tan^2\theta\big|\int_{\Omega_{\frac{\pi}{4}}\setminus Oy^+}|\partial_y v|^2\dd x\dd y\\
 &\le \varepsilon \int_{\Omega_{\frac{\pi}{4}}\setminus Oy^+}|\partial_y v|^2\dd x\dd y\le c\varepsilon \|v\|^2_s.
\end{align*}
Hence,
\[
s_{\theta^{'}}(v,v)\ge s_{\theta}(v,v)-c\varepsilon \|v\|^2_s=(1-c\varepsilon)s_\theta(v,v)-c\varepsilon \big(1-\Lambda_1(S_\theta)\big)\|v\|^2_{L^2(\Omega_{\frac{\pi}{4}})}.
\]
The expression on the right-hand side is the sesquilinear form for the self-adjoint operator $(1-c\varepsilon)S_\theta - c\varepsilon\big(1-\Lambda_1(S_\theta)\big)$, and the min-max principle implies
the inequality $\Lambda_n(S_{\theta'})\ge (1-c\varepsilon)\Lambda_n(S_\theta)-c\varepsilon\big(1-\Lambda_1(S_\theta)\big)$.
Analogously one obtains the upper bound $\Lambda_n(S_{\theta'})\le (1+c\varepsilon)\Lambda_n(S_\theta)+c\varepsilon\big(1-\Lambda_1(S_\theta)\big)$,
so altogether one arrives at
\[
\big|\Lambda_n(S_{\theta'})-\Lambda_n(S_\theta)\big|\le c\varepsilon \Big(|\Lambda_n(S_\theta)|
+|\Lambda_1(S_\theta)|+1\Big)  \text{ as } |\theta'-\theta|<\delta.
\]
As $\varepsilon>0$ can be chosen arbitrarily small, this shows the continuity.
\end{proof}

\begin{corollary}
Let $n\in\mathbb{N}$ and $\theta_{n}\in(0,\frac{\pi}{2})$ be such that $H_{\theta_{n}}$ has at least $n$ discrete eigenvalues in $(-\infty,-4)$.
Then $H_{\theta}$ has at least $n$ discrete eigenvalues for all $\theta\in(0,\theta_{n})$, and the function $(0,\theta_{n})\ni\theta\mapsto E_{n}(H_{\theta})$ is strictly increasing and continuous.
\end{corollary}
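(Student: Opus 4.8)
The plan is to carry everything over to the operator $S_\theta$, for which Corollary~\ref{corol15} identifies the spectrum in $(-\infty,-4)$ with that of $H_\theta$ (including multiplicities, since $H_\theta$ is unitarily equivalent to $\Tilde H^N_\theta\oplus\Tilde H^D_\theta$, $S_\theta\simeq\Tilde H^N_\theta$, and $\spec\Tilde H^D_\theta\subset[-4,\infty)$), and for which Lemma~\ref{lem16} already provides the monotonicity and continuity of $\theta\mapsto\Lambda_n(S_\theta)$. First I would record that $\inf\specess S_\theta\ge-4$: from $\specess H_\theta=\specess\Tilde H^N_\theta\cup\specess\Tilde H^D_\theta=[-4,\infty)$ one gets $\inf\specess S_\theta=\inf\specess\Tilde H^N_\theta\ge-4$. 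The hypothesis that $H_{\theta_n}$ has at least $n$ eigenvalues below $-4$ then reads $\Lambda_n(S_{\theta_n})=E_n(S_{\theta_n})<-4$ by the min-max principle.

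For the existence claim and the eigenvalue identity, fix $\theta\in(0,\theta_n)$. The monotonicity from Lemma~\ref{lem16} gives $\Lambda_1(S_\theta)\le\dots\le\Lambda_n(S_\theta)\le\Lambda_n(S_{\theta_n})<-4\le\inf\specess S_\theta$, so by the min-max principle these are genuine eigenvalues and $S_\theta$---hence $H_\theta$---has at least $n$ discrete eigenvalues in $(-\infty,-4)$, with $E_n(H_\theta)=E_n(S_\theta)=\Lambda_n(S_\theta)$. Continuity of $\theta\mapsto E_n(H_\theta)$ on $(0,\theta_n)$ is then immediate from the continuity of $\theta\mapsto\Lambda_n(S_\theta)$ in Lemma~\ref{lem16}.

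It remains to upgrade monotonicity to strict monotonicity, which I expect to be the main point. Fix $\theta_1<\theta_2$ in $(0,\theta_n)$ and let $V\subset D(s_{\theta_2})$ be the span of the first $n$ eigenfunctions of $S_{\theta_2}$, so $\dim V=n$ and $s_{\theta_2}(v,v)\le E_n(S_{\theta_2})\|v\|^2$ for all $v\in V$. Since the form domain $D(s_\theta)$ does not depend on $\theta$, the space $V$ is admissible in the variational characterization of $\Lambda_n(S_{\theta_1})$, and the Rayleigh quotient of $s_{\theta_1}$ attains its supremum over the finite-dimensional $V$ at some unit vector $w\in V$. Using the explicit difference of the two forms,
\[
s_{\theta_1}(w,w)=s_{\theta_2}(w,w)-\big(\tan^2\theta_2-\tan^2\theta_1\big)\int_{\Omega_{\frac{\pi}{4}}\setminus Oy^+}|\partial_y w|^2\dd x\dd y,
\]
I would obtain $\Lambda_n(S_{\theta_1})\le s_{\theta_1}(w,w)$ together with $s_{\theta_2}(w,w)\le E_n(S_{\theta_2})=\Lambda_n(S_{\theta_2})$.

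The decisive step is to show that $\int_{\Omega_{\frac{\pi}{4}}\setminus Oy^+}|\partial_y w|^2\dd x\dd y>0$ for this nonzero $w$, since then the displayed identity yields the strict inequality $\Lambda_n(S_{\theta_1})<\Lambda_n(S_{\theta_2})$, i.e.\ $E_n(H_{\theta_1})<E_n(H_{\theta_2})$. If that integral vanished, $w$ would be independent of $y$ on each connected component of $\Omega_{\frac{\pi}{4}}\setminus Oy^+$; but $\Omega_{\frac{\pi}{4}}=\{(x,y):x<y\}$ is a half-plane, all of whose $y$-fibers have infinite length, so any $y$-independent square-integrable function on it must vanish, forcing $w=0$ and a contradiction. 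This geometric observation---that the unboundedness of the fibers rules out nonzero $y$-independent $L^2$ functions---is the crux, while everything else is routine bookkeeping with the min-max principle.
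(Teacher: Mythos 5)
Your proposal is correct and follows essentially the same route as the paper: reduction to $S_\theta$ via Corollary~\ref{corol15}, monotonicity/continuity from Lemma~\ref{lem16}, and strict monotonicity via the form difference $\big(\tan^2\theta_2-\tan^2\theta_1\big)\int|\partial_y v|^2$ evaluated on the span of the first $n$ eigenfunctions of the larger-angle operator, with a contradiction argument showing that a $y$-independent $L^2$ function on $\Omega_{\frac{\pi}{4}}$ must vanish. The only cosmetic differences are that the paper bounds the infimum of $\int|\partial_y v|^2$ over the whole unit sphere of the spectral subspace (getting a quantitative gap $\kappa a$) rather than evaluating at the Rayleigh-quotient maximizer, and it verifies the vanishing claim via the inclusion $(-\infty,b)\times(b,\infty)\subset\Omega_{\frac{\pi}{4}}$ instead of your (equivalent) infinite-fiber observation.
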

\begin{proof}
By Corollary \ref{corol15} we have $\Lambda_n(H_\theta)<-4$ iff $\Lambda_n(S_\theta)<-4$, and then $\Lambda_n(S_\theta)=\Lambda_n(H_\theta)$ for all such $n$.
	
The initial assumption means $\Lambda_n(H_{\theta_n})<-4$, i.e. $\Lambda_n(S_{\theta_n})<-4$. By Lemma \ref{lem16} it follows that $\Lambda_n(S_\theta)\le \Lambda_n(S_{\theta_n})<-4$ for all $\theta\in(0,\theta_n)$,
hence $\Lambda_n(H_\theta)=\Lambda_n(S_\theta)<-4=\inf\specess H_\theta$ for the same $\theta$, showing that $H_\theta$ has at least $n$ eigenvalues $E_j(H_\theta)=\Lambda_j(H_\theta)=\Lambda_j(S_\theta)$, $j=1,\dots,n$. The non-strict monotonicity and continuity follow from Lemma \ref{lem16}.
It remains to show the strict monotonicity of $\theta\mapsto \Lambda_n(S_\theta)\equiv E_n(S_\theta)$.

Let $\theta,\theta'\in(0,\theta_{n})$ with $\theta<\theta'$, then
$\kappa:=\tan^{2}\theta'-\tan^{2}\theta>0$,
and for any $u\in D(s_{\theta})\equiv D(s_{\theta'})$ there holds
\begin{equation*}
s_{\theta}(v,v)=s_{\theta'}(v,v)-\kappa\int_{\Omega_{\frac{\pi}{4}}}|\partial_y v|^2\dd x\dd y.
\end{equation*}
Let $w_{1},...,w_{n}$ be an orthonormal family of eigenvectors of $S_{\theta'}$
for the eigenvalues $E_1(S_{\theta'}),\dots,E_n(S_{\theta'})$ and $W_n:=\vspan(w_1,\dots,w_n)$, then for any $v\in W_n$ one has $s_{\theta'}(v,v)\le E_n(S_{\theta'}) \|v\|^2_{L^2(\Omega_{\frac{\pi}{4}})}$.
As $W_n$ is an $n$-dimensional subspace of $D(s_\theta)$, due to the min-max principle we have
\begin{align*}
	E_n(S_\theta)&\le \sup\big\{s_\theta(v,v): v\in W_n,\,\|v\|^2_{L^2(\Omega_{\frac{\pi}{4}})}=1\big\}\\
	&=\sup\Big\{s_{\theta'}(v,v)-\kappa\int_{\Omega_{\frac{\pi}{4}}}|\partial_y v|^2\dd x\dd y: v\in W_n,\,\|v\|^2_{L^2(\Omega_{\frac{\pi}{4}})}=1\Big\}\\
	&\le \sup\Big\{s_{\theta'}(v,v): v\in W_n,\,\|v\|^2_{L^2(\Omega_{\frac{\pi}{4}})}=1\Big\}\\
	&\quad
	-\kappa \inf\Big\{\int_{\Omega_{\frac{\pi}{4}}}|\partial_y v|^2\dd x\dd y: v\in W_n,\,\|v\|^2_{L^2(\Omega_{\frac{\pi}{4}})}=1\Big\}\\
	&\le E_n(S_{\theta'})-\kappa a,\\
	\text{with }a&:=\inf\Big\{\int_{\Omega_{\frac{\pi}{4}}}|\partial_y v|^2\dd x\dd y: v\in W_n,\,\|v\|^2_{L^2(\Omega_{\frac{\pi}{4}})}=1\Big\}.
\end{align*}
Clearly, $a\ge 0$. Assume that $a=0$. As the unit sphere of $W_{n}$ is compact, the infimum in $a$ is attained at some $v\in W_n$ with $\|v\|^2_{L^2(\Omega_{\frac{\pi}{4}})}=1$, and then
\begin{equation*}
\int_{\Omega_{\frac{\pi}{4}}}\lvert\partial_{y}v\rvert^{2}\dd x \dd y=0,
\end{equation*}
which implies that $\partial_y v=0$. So $v$ takes the form $v(x,y)=w(x)$ for some function $w$, i.e. $v$ only depends on the first variable.
Since for any $b\in\mathbb{R}$ we have $(-\infty,b)\times (b,\infty)\subset\Omega_{\frac{\pi}{4}}$, there has to hold
\begin{equation*}
\begin{aligned}
\infty>\int_{\Omega_{\frac{\pi}{4}}}|v|^2\dd x\dd y&\geq\int_{b}^{\infty}\int_{-\infty}^{b}|v|^2\dd x\dd y
\\
&=\int_{b}^{\infty}\int_{-\infty}^{b}|w(x)|^2\dd x\dd y \geq\int_{b}^{\infty}\|w\|_{L^{2}(-\infty,b)}^2\dd y.
\end{aligned}
\end{equation*}
But this shows that $\|w\|_{L^{2}(-\infty,b)}^2=0$ for any $b\in\RR$, i.e. $w=0$.
But then $v=0$, which contradicts the normalization of $v$. 
Hence, $a=0$ is impossible: one has $a>0$
and  $E_{n}(S_{\theta})\le E_{n}(S_{\theta'})-\kappa a<E_{n}(S_{\theta'})$.
\end{proof}

\subsection{Small angle asymptotics} Now we prove the remaining statement concerning the behavior of eigenvalues as the angle $\theta$ becomes small.
\begin{lemma}\label{lem-angle}
For any fixed $n\in\mathbb{N}$ there exists $\theta_n>0$ such that $H_\theta$ has at least $n$ discrete eigenvalues
for all $\theta\in(0,\theta_n)$, and there holds
\begin{equation}
	   \label{eqent}
E_{n}(H_{\theta})=-\frac{1}{(2n-1)^{2}\theta^{2}}+O\Big(\dfrac{1}{\theta}\Big)\text{ as }
\theta\rightarrow0^+.
\end{equation}
\end{lemma}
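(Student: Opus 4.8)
The plan is to establish matching two-sided bounds on $E_n(H_\theta)$ by comparison with the Robin Laplacians $Q^\gamma_\theta$ of Subsection~\ref{sec-robin}, whose small-angle asymptotics $E_n(Q^\gamma_\theta)=-\gamma^2/((2n-1)^2\theta^2)+O(1)$ are already available (with the error uniform under the exact scaling $Q^\gamma_\theta\simeq\gamma^2 Q_\theta$). The geometric idea is that for small $\theta$ the relevant eigenfunctions concentrate in the thin sector $\Omega_+=U_\theta$ of opening $2\theta$, while the wide sector $\Omega_-$ (opening $2\pi-2\theta\ge\pi$) carries no bound states; the only real issue is to treat the $\delta'$-coupling across $\Gamma_\theta=\partial\Omega_+=\partial\Omega_-$ with the sharp constant $\gamma=1$ in the leading term. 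Throughout I would use the splitting $\RR^2\setminus\Gamma_\theta=\Omega_+\cup\Omega_-$ with $\Omega_\pm$ from \eqref{eqompp} and the isometry $J$ from \eqref{eqjj}, writing $u_\pm:=u|_{\Omega_\pm}$, so that $h_\theta(u,u)=\int_{\Omega_+}|\nabla u_+|^2\dd x+\int_{\Omega_-}|\nabla u_-|^2\dd x-\int_{\Gamma_\theta}|u_+-u_-|^2\dd\sigma$.

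For the \emph{upper bound} I would restrict the form to functions supported in $\overline{\Omega_+}$. Extending $v\in H^1(\Omega_+)$ by zero to $\Omega_-$ defines an isometry $J_+\colon L^2(\Omega_+)\to L^2(\RR^2)$ with $J_+D(q^1_\theta)\subset D(h_\theta)$; since then $[J_+v]=v|_{\partial\Omega_+}$, one computes $h_\theta(J_+v,J_+v)=\int_{\Omega_+}|\nabla v|^2\dd x-\int_{\partial\Omega_+}|v|^2\dd\sigma=q^1_\theta(v,v)$, using $\Omega_+=U_\theta$. Hence $H_\theta\le Q^1_\theta$ using $J_+$, so $\Lambda_n(H_\theta)\le\Lambda_n(Q^1_\theta)=E_n(Q^1_\theta)$. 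As $E_n(Q^1_\theta)\to-\infty$ for $\theta\to0^+$, this value lies below $-4=\inf\specess H_\theta$ for small $\theta$; relation~(3) of Subsection~\ref{sec-min-max} then yields the existence of at least $n$ discrete eigenvalues, and $E_n(H_\theta)=\Lambda_n(H_\theta)\le E_n(Q^1_\theta)=-1/((2n-1)^2\theta^2)+O(1)$.

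For the \emph{lower bound} I would sharpen the inequality $|u_+-u_-|^2\le2(|u_+|^2+|u_-|^2)$ used in Lemma~\ref{lem2} to the weighted estimate $|u_+-u_-|^2\le(1+\eta)|u_+|^2+(1+\eta^{-1})|u_-|^2$, valid for every $\eta>0$. This gives $H_\theta\ge Q^{1+\eta}_\theta\oplus Q^{1+\eta^{-1}}_{\pi-\theta}$ using $J$, where the second summand is the Robin Laplacian on the wide sector of half-angle $\pi-\theta\ge\frac{\pi}{2}$; by property~(2) of Subsection~\ref{sec-robin} it has empty discrete spectrum, so $\spec Q^{1+\eta^{-1}}_{\pi-\theta}=[-(1+\eta^{-1})^2,\infty)$. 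The decisive choice is $\eta=c\theta$ with a constant $c>2n-1$ depending only on $n$. Then $(1+\eta)^2=1+O(\theta)$, and by the scaling $Q^\gamma_\theta\simeq\gamma^2Q_\theta$ one gets $E_n(Q^{1+\eta}_\theta)=-(1+c\theta)^2/((2n-1)^2\theta^2)+O(1)=-1/((2n-1)^2\theta^2)+O(1/\theta)$. Since $c>2n-1$ gives $1/c^2<1/(2n-1)^2$, a short computation shows $E_n(Q^{1+\eta}_\theta)<-(1+\eta^{-1})^2$ for all small $\theta$, so the first $n$ spectral points of the direct sum below its essential threshold $-(1+\eta^{-1})^2$ are exactly $E_1(Q^{1+\eta}_\theta),\dots,E_n(Q^{1+\eta}_\theta)$; thus $\Lambda_n\big(Q^{1+\eta}_\theta\oplus Q^{1+\eta^{-1}}_{\pi-\theta}\big)=E_n(Q^{1+\eta}_\theta)$ and therefore $E_n(H_\theta)=\Lambda_n(H_\theta)\ge E_n(Q^{1+\eta}_\theta)=-1/((2n-1)^2\theta^2)+O(1/\theta)$.

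The hard part will be exactly this balancing in the lower bound: a fixed $\eta$ spoils the leading coefficient (it becomes $(1+\eta)^2\ne1$, producing an error of order $\theta^{-2}$), whereas too small an $\eta$ raises the wide-sector threshold $-(1+\eta^{-1})^2$ above the eigenvalues one wants to capture and pollutes the comparison. The scaling $\eta=c\theta$ with $c>2n-1$ simultaneously keeps the leading term equal to $-1/((2n-1)^2\theta^2)$, pushes the error down to $O(1/\theta)$, and keeps the wide-sector essential spectrum safely below $E_n(Q^{1+\eta}_\theta)$. Combining the upper and lower bounds yields \eqref{eqent}.
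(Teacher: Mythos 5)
Your proposal is correct and takes essentially the same route as the paper's own proof: the upper bound via extension by zero of Robin test functions on $U_\theta$ (giving $H_\theta\le Q^1_\theta$ and hence $E_n(H_\theta)=\Lambda_n(H_\theta)$ below $-4$ for small $\theta$), and the lower bound via the weighted inequality $|u_+-u_-|^2\le(1+\varepsilon)|u_+|^2+(1+\varepsilon^{-1})|u_-|^2$ yielding $H_\theta\ge Q^{1+\varepsilon}_\theta\oplus Q^{1+1/\varepsilon}_{\pi-\theta}$ with $\varepsilon$ proportional to $\theta$. One point deserves emphasis: your condition $c>2n-1$ on the constant in $\eta=c\theta$ is the correct one, since it is exactly what makes the wide-sector threshold $-(1+1/\eta)^2\approx-1/(c^2\theta^2)$ lie \emph{above} $E_n(Q^{1+\eta}_\theta)\approx-1/((2n-1)^2\theta^2)$ so that the minimum in the comparison is attained at the narrow-sector eigenvalue; the paper instead prints the choice $b_n\in(0,2n-1)$, for which the claimed inequality reverses as $\theta\to0^+$, so your write-up silently corrects what appears to be a slip in the published argument.
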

\begin{proof}
We again decompose $\RR^2\setminus\Gamma_\theta$ in $\Omega_\pm$ as in \eqref{eqompp},
	and for $u\in L^2(\RR^2)$ denote $u_\pm:=u|_{\Omega_\pm}$, then the expression for the sesquilinear form $h_\theta$
	can be rewritten as
\begin{equation}
	\label{ht2}
	h_\theta(u,u)=\int_{\Omega_+} |\nabla u_+|^2\dd x+\int_{\Omega_-} |\nabla u_-|^2\dd x
	-\int_{\Gamma_\theta} |u_+-u_-|^2\dd\sigma.
\end{equation}

Consider the linear isometry
\begin{equation*}
J: L^2(\Omega_+)\to L^2(\RR^2),
\quad
		Jv(x)=
		\begin{cases}
			v(x),&x\in\Omega_+,\\
			0, &\text{otherwise,}
		\end{cases}
\end{equation*}
then $J H^1(\Omega_+)\subset H^1(\RR^2\setminus \Gamma_\theta)\equiv D(h_\theta)$,
and for any $v\in H^1(\Omega_+)$ there holds
\begin{equation}\label{asymptEV2}
	\begin{aligned}
		h_{\theta}(Jv,Jv)&=\int_{\Omega_+}|\nabla v|^2\dd x-\int_{\Gamma_\theta}|v|^2\dd\sigma\\
		&\equiv \int_{\Omega_+}|\nabla v|^2\dd x-\int_{\partial\Omega_+}|v|^2\dd\sigma\equiv q_\theta(v,v);
\end{aligned}
\end{equation}
recall that the form  $q_\theta$ and the respective Robin Laplacian $Q_\theta$ are defined in \eqref{sec-robin}. Therefore, we have $H_\theta\le Q^1_\theta$ using $J$, which implies
\[
\Lambda_n(H_\theta)\le \Lambda_n(Q_\theta) \text{ for all  $n\in\NN$ and $\theta\in \Big(0,\tfrac{\pi}{2}\Big)$.}
\]
As discussed in Subsection \ref{sec-robin}, for any fixed $n\in\NN$ there holds
\[
 E_n(Q_\theta)\equiv \Lambda_n(Q_\theta)=-\dfrac{1}{(2n-1)^2\theta^2}+O(1) \text{ for } \theta\to 0^+,
\]
which implies the upper bound
\[
\Lambda_n(H_\theta)\le -\dfrac{1}{(2n-1)^2\theta^2}+O(1) \text{ for } \theta\to 0^+.
\]
For small $\theta$ the right-hand side is clearly smaller than $\inf\specess H_\theta\equiv -4$, which implies $E_n(H_\theta)=\Lambda_n(H_\theta)$ and shows the sought upper bound for $E_n(H_\theta)$ in \eqref{eqent}.

In order to obtain a lower bound for $\Lambda_n(H_\theta)$
we recall first that for arbitrary $a,b\ge 0$ and $\varepsilon>0$ one has
\[
2ab=2\cdot \sqrt{\varepsilon} a\cdot \frac{b}{\sqrt{\varepsilon}}\leq \varepsilon a^{2}+\dfrac{b^{2}}{\varepsilon}.
\]
Hence, for arbitrary $u\in D(h_\theta)$ one has
\begin{equation*}
\begin{aligned}
\lvert u_{+}-u_{-}\rvert^{2}\leq\lvert u_{+}\rvert^{2}+\lvert u_{-}\rvert^{2}+2\lvert u_{+}u_{-}\rvert\leq(1+\varepsilon)\lvert u_{+}\rvert^{2}+\big(1+\frac{1}{\varepsilon}\big)\lvert u_{-}\rvert^{2},
\end{aligned}
\end{equation*}
and the substitution into \eqref{ht2} gives
\begin{align*}
	h_\theta(u,u)&\ge \int_{\Omega_+} |\nabla u_+|^2\dd x+\int_{\Omega_-} |\nabla u_-|^2\dd x\\
	&\quad
	-(1+\varepsilon)\int_{\Gamma_\theta} |u_+|^2\dd\sigma-\big(1+\frac{1}{\varepsilon}\big)\int_{\Gamma_\theta} |u_-|^2\dd\sigma\\
	&=q^{1+\varepsilon}_\theta(u_+,u_+)-r(u_-,u_-),
\end{align*}
where $q^{1+\varepsilon}_\theta$ and the associated operator $Q^{1+\varepsilon}_\theta$ are discussed in Subsection \ref{sec-robin}, and $r$ is the closed, lower semibounded, symmetric sesquilinear form
\[
r(v,v)=\int_{\Omega_-} |\nabla v|^2\dd x-\big(1+\frac{1}{\varepsilon}\big)\int_{\Gamma_\theta} |v|^2\dd\sigma,
\quad D(r)=H^1(\Omega_-).
\]
Let $R$ be the self-adjoint operator in $L^2(\Omega_-)$ associated with $r$, then the above estimates mean that $H_\theta\ge Q^{1+\varepsilon}_\theta\oplus R$ using the unitary map
\[
J': L^2(\RR^2)\ni u\mapsto(u_+,u_-)\in L^2(\Omega_+)\oplus L^2(\Omega_-),
\]
which implies $\Lambda_n(H_\theta)\ge \Lambda_n(Q^{1+\varepsilon}_\theta\oplus R)$ for any $n$ and any $\theta$. Remark that $R$ is just a rotated version of $Q^{1+\frac{1}{\varepsilon}}_{\pi-\theta}$.
Since, by the min-max principle, $\Lambda_1(Q^{1+\frac{1}{\varepsilon}}_{\pi-\theta})=\inf \spec Q^{1+\frac{1}{\varepsilon}}_{\pi-\theta}$ and $\pi-\theta\geq\frac{\pi}{2}$, we know from Subsection \ref{sec-robin} that $\Lambda_1(Q^{1+\frac{1}{\varepsilon}}_{\pi-\theta})= -(1+\frac{1}{\varepsilon})^2$. It follows that for any $n\in\NN$ and any $\theta\in(0,\frac{\pi}{2})$
one has
\begin{equation}
	  \label{lam3}
\begin{aligned}
\Lambda_n(H_\theta)&\ge \Lambda_n(Q^{1+\varepsilon}_\theta\oplus R)=
\Lambda_n(Q^{1+\varepsilon}_\theta\oplus Q^{1+\frac{1}{\varepsilon}}_{\pi-\theta})\\
&\ge \min\Big\{ \Lambda_n(Q^{1+\varepsilon}_\theta), -\Big(1+\frac{1}{\varepsilon}\Big)^2\Big\}.
\end{aligned}
\end{equation}
Now let $n\in\NN$ be fixed, then one has, as $\theta\to 0^+$,
\begin{align*}
\Lambda_n(Q^{1+\varepsilon}_\theta)&=(1+\varepsilon)^2
\Lambda_n(Q^{1}_\theta)= (1+\varepsilon)^2\Big[ -\dfrac{1}{(2n-1)^2\theta^2}+O(1)\Big],
\end{align*}
where the $O(1)$-term is independent of $\varepsilon$.
We now set $\varepsilon:=b_n \theta$ with $b_n>0$ that will be chosen later, then the preceding estimate implies
\[
\Lambda_n(Q^{1+\varepsilon}_\theta)= -\dfrac{1}{(2n-1)^2\theta^2}+O\Big(\dfrac{1}{\theta}\Big) \text{ as } \theta\to 0^+.
\]
At the same time, choosing $b_n\in(0,2n-1)$ we arrive at
\[
-\Big(1+\frac{1}{\varepsilon}\Big)^2=-1-\frac{2}{b_{n}\theta}-\frac{1}{b_{n}^{2}\theta^{2}}> \Lambda_n(Q^{1+\varepsilon}_\theta) \text{ as } \theta\to 0^+.
\]
By \eqref{lam3} it follows that for $\theta\to 0^+$ there holds
\begin{align*}
\Lambda_n(H_\theta)&\ge \min\Big\{ \Lambda_n(Q^{1+\varepsilon}_\theta), -\Big(1+\frac{1}{\varepsilon}\Big)^2\Big\}=\Lambda_n(Q^{1+\varepsilon}_\theta)=-\dfrac{1}{(2n-1)^2\theta^2}+O\Big(\dfrac{1}{\theta}\Big).
\end{align*}
We already know from the first part of the proof that $E_n(H_\theta)=\Lambda_n(H_\theta)$ for small $\theta$. Hence, the last inequality gives the sought lower bound for $E_n(H_\theta)$ in \eqref{eqent} and completes the proof.
\end{proof}

\begin{corollary}\label{corolast}
	Let $\alpha<0$.	For any $M\ge 2$ and $n\in\NN$ there exists a star graph $\Gamma$ such that $T_{\Gamma,\alpha}$ has at least 	$n$ discrete eigenvalues.
\end{corollary}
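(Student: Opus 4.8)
The plan is to combine the small-angle asymptotics of Lemma \ref{lem-angle}, which produces arbitrarily many discrete eigenvalues for two-branch graphs as the opening angle shrinks, with the monotonicity of Lemma \ref{lem0}, according to which enlarging a star graph can only increase the number of discrete eigenvalues below the threshold of the essential spectrum. By the unitary equivalence \eqref{unit1} it suffices to treat the case $\alpha=-1$, that is, to exhibit for every $M\ge 2$ and $n\in\NN$ a star graph $\Gamma$ with $M$ branches such that $T_\Gamma$ has at least $n$ discrete eigenvalues; the general statement for $\alpha<0$ then follows by transferring the conclusion back through \eqref{unit1}, since the dilation is unitary and maps the interval $(-\infty,-4)$ onto $(-\infty,-4\alpha^2)=(-\infty,\inf\specess T_{\Gamma,\alpha})$, so eigenvalues below $-4$ remain genuinely discrete.

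First I would fix $n$ and invoke Lemma \ref{lem-angle}: there is $\theta_n>0$ such that the two-branch operator $H_\theta=T_{\Gamma_\theta}$ has at least $n$ discrete eigenvalues in $(-\infty,-4)$ for every $\theta\in(0,\theta_n)$. Fix one such $\theta$. For $M=2$ this already settles the claim, as $\Gamma_\theta$ is a star graph with exactly two branches. For $M>2$, I would construct $\Tilde\Gamma$ by adjoining to $\Gamma_\theta$ a further $M-2$ half-lines emanating from the origin at angular directions distinct from the two already present, so that $\Tilde\Gamma$ is a star graph with exactly $M$ branches satisfying $\Gamma_\theta\subset\Tilde\Gamma$. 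Lemma \ref{lem0} then yields $T_{\Gamma_\theta}\ge T_{\Tilde\Gamma}$ using the identity map, that is, $T_{\Tilde\Gamma}\le T_{\Gamma_\theta}$ in the sense of Subsection \ref{sec-min-max}, whence $\Lambda_n(T_{\Tilde\Gamma})\le\Lambda_n(T_{\Gamma_\theta})$.

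To conclude, I would note that both operators have essential spectrum $[-4,\infty)$ by Theorem \ref{thm1}, so that the part of each spectrum lying in $(-\infty,-4)$ is purely discrete. Since $T_{\Gamma_\theta}$ has at least $n$ eigenvalues below $-4$, one has $\Lambda_n(T_{\Gamma_\theta})=E_n(T_{\Gamma_\theta})<-4$, and therefore $\Lambda_n(T_{\Tilde\Gamma})\le\Lambda_n(T_{\Gamma_\theta})<-4=\inf\specess T_{\Tilde\Gamma}$. The min-max principle then forces $E_n(T_{\Tilde\Gamma})=\Lambda_n(T_{\Tilde\Gamma})$, so $T_{\Tilde\Gamma}$ possesses at least $n$ discrete eigenvalues in $(-\infty,-4)$, and $\Tilde\Gamma$ is the desired star graph with $M$ branches. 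I do not expect a genuine obstacle here: the entire analytic content, namely the unbounded growth of the eigenvalue count as the opening angle tends to zero, is already carried by Lemma \ref{lem-angle}, and the passage from two to $M$ branches is a pure monotonicity argument. The only points requiring care are ensuring that the adjoined branches sit at pairwise distinct angles, so that $\Tilde\Gamma$ really has $M$ branches and respects the enumeration convention $0\le\theta_1<\dots<\theta_M<2\pi$, and observing that the common value $-4=\inf\specess$ of the two operators prevents the newly produced eigenvalues from being absorbed into the essential spectrum.
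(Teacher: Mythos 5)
Your proposal is correct and follows essentially the same route as the paper: invoke Lemma \ref{lem-angle} to get a small-angle two-branch graph $\Gamma_\theta$ with at least $n$ discrete eigenvalues, adjoin $M-2$ further branches, and apply Lemma \ref{lem0} together with the min-max principle and the equality of the essential spectra. Your explicit reduction to $\alpha=-1$ and the remark on transferring eigenvalues back through \eqref{unit1} only spell out what the paper leaves implicit.
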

	
\begin{proof}
Take $\theta>0$ sufficiently small such that $H_\theta$ has $n$ discrete eigenvalues (which is possible by Lemma \ref{lem-angle}). Construct $\Gamma$ by adding arbitrary $M-2$ additional branches to $\Gamma_\theta$,
then $T_{\Gamma,\alpha}\le H_{\theta,\alpha}$ using the identity map (Lemma~\ref{lem0}). As both operators have the same essential spectrum $[-4\alpha^2,+\infty)$, it follows by the min-max principle
that $T_{\Gamma,\alpha}$ has at least $n$ discrete eigenvalues in $(-\infty,-4\alpha^2)$.
\end{proof}

\appendix

\section{Proof of $\spec T_{\Gamma,\alpha}=[0,\infty)$ for $\alpha\geq0$}\label{specapp}
In this section we want to show that $\spec T_{\Gamma,\alpha}=[0,\infty)$ in the case of a  ``repulsive interaction'' ($\alpha\geq0$).

Since $t_{\Gamma,\alpha}(u,u)\geq 0$ for all $u\in H^1(\mathbb{R}^2\setminus\Gamma)$, we have $T_{\Gamma,\alpha}\geq 0$
and $\spec T_{\Gamma,\alpha}\subset [0,\infty)$. For the reverse inclusion we make use of Weyl sequences. The set  $\RR^2\setminus\Gamma$ contains arbitrary large balls, and
for any $n\in\NN$ there exists $a_n\in\mathbb{R}^2\setminus\Gamma$ such that
$B_n:=\{x\in\RR^2: |x-a_n|<n\}\subset\mathbb{R}^2\setminus\Gamma$.
Let $k\in\RR$. Pick   $\chi\in C^\infty(\mathbb{R})$ with $0\leq \chi\leq 1$, $\chi(t)=1$ for $t\leq0$, $\chi(t)=0$ for $t\geq \frac{1}{2}$ and consider the functions
\[
f_n:x=(x_1,x_2)\mapsto e^{ikx_1}\chi\big(|x-a_n|-(n-1)\big)\in C_c^{\infty}(\mathbb{R}^2\setminus\Gamma).
\]
One has $f_n \subset C^\infty_c(\RR^2\setminus\Gamma)\subset D(T_{\Gamma,\alpha})$ with
$T_{\Gamma,\alpha} f_n=-\Delta f_n$. Due to $|f_n(x)|=1$ for $|x-a_n|\le n-1$ one obtains
\begin{equation}
	  \label{fnorm}
\|f_n\|^2_{L^2(\RR^2)}\ge \pi (n-1)^2.
\end{equation}
Using the abbreviation $(\dots):=\big(|x-a_n|-(n-1)\big)$ we compute
\[
-\Delta f_n=k^2f_n-2\nabla e^{ikx_1}\cdot \nabla\chi(\dots)-e^{ikx_1}\Delta\chi(\dots),
\]
which leads to $(T_{\Gamma,\alpha}-k^2)f_n=-2\nabla e^{ikx_1}\cdot \nabla\chi(\dots)-e^{ikx_1}\Delta\chi(\dots)$.
From this expression one immediately sees that
\begin{align*}
\|(T_{\Gamma,\alpha}-k^2)f_n\|_\infty&\le C:= 2k\|\nabla\chi\|_\infty+\|\Delta\chi\|_\infty,\\
\supp (T_{\Gamma,\alpha}-k^2)f_n&\subset \{x\in\RR^2: n-1\le|x-a_n|\le n-\frac12\},
\end{align*}
which implies $\|(T_{\Gamma,\alpha}-k^2)f_n\|^2_{L^2(\RR^2)}\le C^2 \pi \big( (n-\frac12)^2- (n-1)^2\big )\equiv C^2\pi\big(n-\frac{3}{4}\big)$. By combining this with \eqref{fnorm} one arrives at
\begin{equation*}
\begin{aligned}
\frac{\|(T_{\Gamma,\alpha}-k^2) f_n\|^2_{L^2(\mathbb{R}^2)}}{\|f_n\|^2_{L^2(\mathbb{R}^2)}}\leq\frac{C^2\pi\big(n-\frac{3}{4}\big)}{\pi (n-1)^2}\xrightarrow{n\to\infty} 0,
\end{aligned}
\end{equation*}
which shows $k^2\in\spec T_{\Gamma,\alpha}$. As $k\in\mathbb{R}$ is arbitrary, one obtains the inclusion $[0,\infty)\subset\spec T_{\Gamma,\alpha}$.

Therefore, we have proved that $\spec T_{\Gamma,\alpha}=[0,\infty)$. As each non-isolated point of the spectrum belongs to the essential spectrum, we arrive at the equalities $\specess T_{\Gamma,\alpha}=\spec T_{\Gamma,\alpha}=[0,\infty)$ and
$\specd T_{\Gamma,\alpha}=\emptyset$.

\begin{remark}  The approach works in much more general situations. Namely, if $\Omega$ is an open set and $T$ is a linear operator in $L^2(\Omega)$
such that  (i) $\Omega$ contains a sequence of balls $B_{r_n}\subset\Omega $ with radii $r_n$ such that $\lim_{n\rightarrow\infty} r_n=\infty$, and (ii) for each $n$ one has the inclusion $C^\infty_c(B_{r_n})\subset D(T)$
and for each $\varphi \in C^\infty_c(B_{r_n})$ one has $T\varphi=-\Delta\varphi$, then the identical argument shows that $[0,\infty)\subset \spec T$.
\end{remark}

\section{Proof of $f_n\in D(T_{\Gamma})$}\label{domainapp}

We want to show that the functions $f_n$ of the Weyl sequence from \eqref{weylseq} belong to $D(T_\Gamma)$ if $n$ is sufficiently large. Recall that this is equivalent to the existence of suitable $F_n\in L^2(\RR^2)$ such that
$t_{\Gamma}(u,f_n)=\langle u,F_n\rangle_{L^2(\mathbb{R}^2)}$ for all $u\in H^1(\mathbb{R}^2\setminus\Gamma)$,
and in that case $F_n=T_\Gamma f_n$, see \cite[Ch. VI, \S 2]{kato}.

First remark that $f_n\in H^2(\RR_+^2\cup \RR^2_-)$ and that due to the the properties of the eigenfunctions $\psi_1$ we have
\begin{equation} 
	\label{eqff}	
\partial_y f_n^+(x,0)=\partial_y f_n^-(x,0), \quad
f_n^+(x,0)-f_n^-(x,0)=-\partial_y f^+_n(x,0), \quad x\in \RR.
\end{equation}
Recall that  $\supp f_n\subset [n,2n]\times[-an,an]$ and $a>0$ is chosen sufficiently small, e.g. $a<\min_j |\tan \varphi_j|$,
then  $\supp f_n$ only touches the branch of $\Gamma$ lying on the positive $x$-axis if  $n$ is large.
Let $\theta_n\in C^\infty_c(\RR^2)$ such that $\theta_n=1$ in $ [n-1,2n+1]\times [-an-1,an+1]$ and $\supp \theta_n\subset [n-2,2n+2]\times [-an-2,an+2]$.
For any $u\in D(t_\Gamma)$ one has $u_n:=\theta_n u\in H^1(\RR_+^2\cup \RR^2_-)$ for large $n$
and $u_n$ conincides with $u$ in a neighborhood of $\supp f_n$, and then
\begin{equation}\label{domain1}
\begin{aligned}
t_{\Gamma}(u,f_n)=t_\Gamma(u_n,f_n)&=\int_{\mathbb{R}^2\setminus\Gamma }\langle\nabla u_n,\nabla f_n\rangle\dd x\dd y\\
&\qquad
-\int_{0}^{\infty}\big(\overline{u^+_n}- \overline{u^-_n}\big)(x,0)\,\big(f_{n}^{+}-f_n^-\big)(x,0)\dd x\\
&=\int_{\RR_+^2}\langle\nabla u^+_n,\nabla f_n^+\rangle\dd x\dd y+\int_{\RR_-^2}\langle\nabla u^-_n,\nabla f_n^-\rangle\dd x\dd y\\
&\qquad - \int_\RR\big(\overline{u^+_n}- \overline{u^-_n}\big)(x,0)\,\big(f_{n}^{+}-f_n^-\big)(x,0)\dd x,
\end{aligned}
\end{equation}
where $u^\pm_n$ are the restrictions of $u_n$ on the half-planes $\{\pm y>0\}$.

Denote by $\partial_\pm$ the outer normal derivative for $\RR^2_\pm$
and remark that for arbitrary functions $g$ on $\RR^2_\pm$ one has $(\partial_+ g)(x,0)=-\partial_y g(x,0)$
and $(\partial_- g)(x,0)=\partial_y g(x,0)$.
Using the integration by parts we obtain
\begin{equation}
	\label{domdom}
\begin{aligned}
	\int_{\RR_+^2}\langle\nabla u^+_n,\nabla f_n^+\rangle\dd x\dd y&= 	\int_{\RR_+^2}\overline{u^+_n} (-\Delta f_n^+)\dd x\dd y+\int_{\partial\RR^2_+} \overline{ u^+_n} \partial_+ f_n^+\dd \sigma\\
	&=\int_{\RR_+^2}\overline{u^+_n} (-\Delta f_n^+)\dd x\dd y-\int_\RR \overline{u^+_n}(x,0) \, \partial_y f_n^+(x,0)\dd x,\\
	\int_{\RR_-^2}\langle\nabla u^-_n,\nabla f_n^-\rangle\dd x\dd y&= 	\int_{\RR_-^2}\overline{u^-_n} (-\Delta f_n^-)\dd x\dd y+\int_{\partial\RR^2_-} \overline{u^-_n} \partial_- f_n^-\dd \sigma\\
&=\int_{\RR_-^2}\overline{u^-_n} (-\Delta f_n^-)\dd x\dd y+\int_\RR \overline{u^-_n}(x,0) \,\partial_y f_n^+(x,0)\dd x.
\end{aligned}
\end{equation}
Define $F_n$ by $F_n(x,y)=-\Delta f_n^\pm (x,y)$ for $\pm y>0$, then the substitution of \eqref{domdom}
into \eqref{domain1} gives, with the help of \eqref{eqff},
\begin{align*}
	t_{\Gamma}(u,f_n)&=\langle u_n,F_n\rangle_{L^2(\RR^2)}- \int_\RR\big(\overline{u^+_n}- \overline{u^-_n}\big)(x,0)\,\big(f_{n}^{+}-f_n^-\big)(x,0)\dd x\\
	&\quad -\int_\RR \overline{u^+_n}(x,0) \, \partial_y f_n^+(x,0)\dd x+\int_0^\infty \overline{u^-_n}(x,0) \,\partial_y f_n^+(x,0)\dd x\\
	&=\langle u_n,F_n\rangle_{L^2(\RR^2)}- \int_{0}^{\infty}\big(\overline{u^+_n}- \overline{u^-_n}\big)(x,0)\,\big(f_{n}^{+}-f_n^-\big)(x,0)\dd x\\
	&\quad-\int_\RR \big(\overline{u^+_n}-\overline{u^-_n}\big)(x,0)\,\partial_y f_n^+(x,0)\dd x\\
	&=\langle u_n,F_n\rangle_{L^2(\RR^2)}-\int_{0}^{\infty}\big(\overline{u^+_n}- \overline{u^-_n}\big)(x,0)\,\big(f_{n}^{+}-f_n^- + \partial_y f_n^+\big)(x,0)\dd x\\
	&=\langle u_n,F_n\rangle_{L^2(\RR^2)}=\langle u,F_n\rangle_{L^2(\RR^2)}.
\end{align*}
In the last step we have used that $u=u_n$ on $\supp F_n$. This shows that $f_n\in D(T_\Gamma)$ with $T_\Gamma f_n=F_n$.

\end{document}